\theoremstyle{plain}
\newtheorem{prop}{Property}[section]
\newtheorem{theorem}[prop]{Theorem}
\newtheorem{corollary}[prop]{Corollary}
\newtheorem{lemma}[prop]{Lemma}
\newtheorem{conjecture}{Conjecture}
\theoremstyle{remark}
\newcommand{\R}{{\mathbb R}}
\newcommand{\Z}{{\mathbb Z}}
\newcommand{\N}{{\mathbb N}}
\newcommand{\beeq}{\begin{eqnarray*}}
\newcommand{\eneq}{\end{eqnarray*}}
\newcommand{\be}{\begin{equation}}
\newcommand{\ee}{\end{equation}}
\title{On Doubling and Volume: Chains}
 \author{Gregory A. Freiman}
 \address{ 
The Raymond and Beverly Sackler Faculty of Exact Sciences\\
School of Mathematical Sciences\\
Tel Aviv University}
\email{grisha@post.tau.ac.il}
 \author{Oriol Serra}
 \address{Department of Mathematics, Universitat Polit\`ecnica de Catalunya and\\ 
 Barcelona Graduate School of Mathematics, Barcelona}
 \email{oriol.serra@upc.edu}
 \thanks{O.\,Serra was supported by the Spanish Ministerio de Econom\'ia y Competitividad under project MTM2014-54745-P}
 \date{}
\begin{document}

  \begin{abstract}  The well--known Freiman--Ruzsa Theorem provides a structural description of a set $A$ of integers with $|2A|\le c|A|$ as a  subset of a $d$--dimensional arithmetic progression $P$ with $|P|\le c'|A|$, where $d$ and $c'$ depend only on $c$.  The estimation of the constants $d$ and $c'$ involved in the statement  has been the object of intense research. Freiman conjectured in 2008 a formula for  the largest volume of such a set. In this paper we prove the conjecture for a general class of sets called chains.  
  \end{abstract}

   \maketitle
   
\section{Introduction}

The Freiman--Ruzsa theorem giving the structure of sets of integers with small doubling is one of the deep results in  Additive Number Theory:

\begin{theorem}[Freiman--Ruzsa] Let $A$ be a finite set of integers. If $|2A|\le c|A|$ then there are constants $d, c'$ depending only on $c$ such that $A$ is contained in a multidimensional arithmetic progression $P$ with dimension $d$ and cardinality $|P|\le c' |A|$.
\end{theorem}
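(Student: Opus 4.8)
The plan is to produce, along the Ruzsa--Chang line of argument, an explicit generalized arithmetic progression (GAP) $P$ of bounded dimension with $A\subseteq P$ and $|P|\le c'|A|$. The ingredients are: the Pl\"unnecke--Ruzsa inequalities, to control iterated sumsets of $A$ in terms of $c$; Ruzsa's covering lemma; Ruzsa's modeling lemma, which transports a large piece of $A$ into a cyclic group $\Z/N\Z$ of controlled size via a Freiman isomorphism; Bogolyubov's Fourier argument in $\Z/N\Z$, which places a Bohr set inside the relevant fourfold sumset; and Minkowski's second theorem, which locates a proper GAP of bounded dimension and density inside that Bohr set.

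First I would record, from $|A+A|\le c|A|$ and the Pl\"unnecke--Ruzsa inequality (via Pl\"unnecke's graph method, or Petridis's elementary argument), the estimates $|kA-\ell A|\le c^{k+\ell}|A|$ for all $k,\ell\ge 0$; these are the only sumset facts the rest of the proof uses. Fix an absolutely bounded integer $s$ large enough for the transport step below (e.g.\ $s=16$). Ruzsa's modeling lemma, applied to $A$ with parameter $s$ and the bound $|sA-sA|\le c^{2s}|A|$, yields a set $A'\subseteq A$ with $|A'|\ge|A|/s$ that is Freiman $s$-isomorphic to a subset $\widetilde A$ of $\Z_N:=\Z/N\Z$ for some $N=O_c(|A|)$. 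Since an $s$-isomorphism of that order respects all the signed relations occurring in $2A'-2A'$ and in the GAP-arithmetic to follow, it suffices to exhibit a suitable proper GAP inside $2\widetilde A-2\widetilde A$ and transport it back.

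Working in $G=\Z_N$, expand $\mathbf 1_{\widetilde A}\ast\mathbf 1_{-\widetilde A}\ast\mathbf 1_{\widetilde A}\ast\mathbf 1_{-\widetilde A}$ in Fourier and retain the large-spectrum set $R=\{r\in G:|\widehat{\mathbf 1_{\widetilde A}}(r)|\ge\alpha|\widetilde A|\}$ for a suitable absolute $\alpha\in(0,1)$. By Parseval $|R|\le\alpha^{-2}N/|\widetilde A|=O_c(1)$, and the standard bound on the small-spectrum contribution gives $2\widetilde A-2\widetilde A\supseteq B(R,1/4):=\{x\in G:\|rx/N\|<1/4\text{ for all }r\in R\}$, while a covering/pigeonhole argument gives $|B(R,1/4)|\ge 4^{-|R|}N$. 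Regarding $B(R,1/4)$ as a symmetric convex body in the torus pulled back to a lattice and applying Minkowski's second theorem, one extracts $|R|$ linearly independent generators, hence a proper GAP $Q$ of dimension $\le|R|$ with $Q\subseteq B(R,1/2)\subseteq 2\widetilde A-2\widetilde A$ and $|Q|\ge(2|R|)^{-|R|}|B(R,1/4)|\ge c''N$ for some $c''=c''(c)>0$. Pulling $Q$ back through the $s$-isomorphism gives a proper GAP $P_0\subseteq 2A'-2A'\subseteq 2A-2A$ of bounded dimension $d_0\le|R|$ with $|P_0|\ge c''N\ge c'''|A|$.

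To finish, note $|A+P_0|\le|3A-2A|\le c^5|A|\le(c^5/c''')|P_0|$, so Ruzsa's covering lemma yields a set $X$, $|X|\le c^5/c'''$, with $A\subseteq X+(P_0-P_0)$. Now $P_0-P_0$ is a GAP of dimension $d_0$ and size $\le 2^{d_0}|P_0|$, and writing $X=\{x_1,\dots,x_m\}$ the set $\{0,x_1\}+\dots+\{0,x_m\}$ is a GAP of dimension $m=|X|$ containing $X$; hence $P:=\{0,x_1\}+\dots+\{0,x_m\}+(P_0-P_0)$ is a GAP with $A\subseteq P$, of dimension $d_0+m=O_c(1)$ and size $\le 2^{d_0+m}|P_0|\le 2^{d_0+m}c^4|A|=O_c(|A|)$, which is the assertion with explicit $d=d(c)$ and $c'=c'(c)$. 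Every step here is classical; the genuine difficulty --- and the reason the admissible pair $(d(c),c'(c))$ has been reworked so many times --- is quantitative rather than conceptual, since the Bogolyubov step loses a factor exponential in $|R|$ while $|R|$ is only polynomial in $c$, so that the naive bound on $c'$ is doubly exponential in $c$. Tightening this (through a more efficient Bohr-set-to-GAP passage, or Sanders-type quasi-polynomial Bogolyubov estimates) is the real crux, and it is precisely the optimal such constant --- the ``largest volume'' of $A$ --- that Freiman's conjecture, established in this paper for chains, is meant to pin down.
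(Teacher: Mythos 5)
This theorem is not proved in the paper at all: it is quoted as the classical background result (with references to Freiman, Bilu, Ruzsa, Chang, Sanders and Schoen for its various proofs and quantitative refinements), and the paper's actual content concerns the optimal constants, not the theorem itself. So there is nothing in the source to compare your argument against line by line; what you have written is a sketch of the standard Ruzsa--Chang proof, which is precisely the argument the introduction attributes to \cite{Ruzsa94} and \cite{Chang2002}. As a sketch it is essentially correct: Pl\"unnecke--Ruzsa to get $|kA-\ell A|\le c^{k+\ell}|A|$, the modeling lemma to pass to a dense model $\widetilde A\subseteq\Z/N\Z$ with $N=O_c(|A|)$, Bogolyubov plus Minkowski's second theorem to find a proper GAP of bounded dimension and positive density inside $2\widetilde A-2\widetilde A$, pullback, and Ruzsa covering to absorb all of $A$. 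One small inaccuracy: the spectral threshold $\alpha$ cannot be an absolute constant. In the Bogolyubov step the error from the small spectrum is of order $\alpha^2|\widetilde A|^3$ while the main term is $|\widetilde A|^4/N$, so one needs $\alpha^2<|\widetilde A|/N$, i.e.\ $\alpha$ must depend on the density of $\widetilde A$ in $\Z/N\Z$ and hence on $c$. This does not damage anything downstream, since the density is bounded below by a function of $c$ and your bound $|R|\le\alpha^{-2}N/|\widetilde A|=O_c(1)$ remains valid; it is worth stating correctly because, as you rightly observe at the end, the whole quantitative story (and the subject of this paper) lives in exactly these losses.
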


The estimation of the constants $d$ and $c'$ involved in the statement has been the object of a long series of papers. From the first proof of Freiman \cite{Freiman87} and Bilu \cite{Bilu99} one can obtain a fourth exponential dependence of $c'$ on $c$. The proof by Ruzsa \cite{Ruzsa94}, which provided an estimation of the form $c'\le  exp (c^{c^{c}})$, was subsequently refined by Chang \cite{Chang2002}, giving $d\le c^{2+o(1)}$ and $c'\le \exp(c^2+o(1))$, further improved by Sanders to $d\le c^{4/7+o(1)}$ and $c'\le \exp(c^{4/7+o(1)})$ and eventually brought to its essentially best values $d\le c^{1+k(\log c)^{-1/2}}$ and $c'\le \exp(c^{1+k(\log c)^{-1/2}})$, $k$ an absolute constant,  by Schoen \cite{Schoen2011}.

In a conference in Toronto in 2008, Freiman proposed a precise formula for the largest possible volume of a set $A$ with given doubling $T=|2A|$ in terms of a specific parametrization of the value of $T$, see e.g. Freiman \cite{Freiman2014}.  We next recall some definitions in order to give this conjectured formula for the maximum volume.

 Let $G, G'$ be two abelian groups. Two finite sets $A\subset G$ and $B\subset G'$ are {\it Freiman isomorphic} ($F$--isomorphic for short) if there is a bijection $\phi:A\to B$ such that, for every $x,y,z,t\in G$,
$$
x+y=z+t \; \Leftrightarrow \phi(x)+\phi(y)=\phi(z)+\phi(t),
$$ 
in which case we write $A\cong_F B$. 

A set $A$ is in {\it normal form} if $\min (A)=0$ and $\gcd (A)=1$. Every set $A$ is $F$--isomorphic to $\tilde{A}=(A-\min (A))/\gcd(A)$ which is in normal form, and $\tilde{A}$ is the {\it normalization} of $A$. If $\min(A)=0$, then the {\it reflexion} of $A$ is defined as $A^-=-A+\max(A)$.

 The {\it additive dimension} $\dim (A)$ of a set $A\subset G$ is the largest $d$ such that there is a  set $B\subset \Z^d$ not contained in a hyperplane of $\Z^d$ which is $F$--isomorphic to $A$. 
 
 The {\it volume} $vol (A)$ of a $d$--dimensional set $A$ is the minimum cardinality, among all sets    $B\subset \Z^d$ which are $F$--isomorphic to $A$, of the convex hull of $B$. In particular, if $A$ is a $1$--dimensional set in normal form, then $$vol(A)=\max(A)+1.$$
 
 We are interested in obtaining upper bounds for the volume of a set $A$ of integers in terms of its cardinality $|A|$ and the cardinality of its doubling $|2A|$. We denote by
$$
vol(k,T)=\max \{vol (A): A\subset \N \;, |A|=k, |2A|=T \},
$$
the maximum value of the volume of a  set $A$ of integers among all  sets with cardinality $k$ and doubling $T$.

 A set $A$ is {\it extremal} if  $vol(A)=vol(|A|,|2A|)$.  The following conjecture is stated in Freiman \cite{Freiman2014}.

\begin{conjecture}[Freiman]\label{conj:main} Let $A$ be a   set of integers with cardinality  $k=|A|\ge 4$. If
\begin{equation}\label{eq:2}
|2A|=ck-{c+1\choose 2}+b+2,
\end{equation}
where $2\le c\le k-2$ and   $1\le b\le  k-c-1$, then
\begin{equation}\label{eq:max}
vol (A)\le 2^{c-2}(k-c+b+1)+1.
\end{equation}
Moreover the inequality is tight and it is reached by $1$--dimensional sets.
\end{conjecture}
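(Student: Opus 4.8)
I will describe a strategy for the case relevant here, where $A$ is a chain. The plan is to work inside a fixed $F$--isomorphism class, using that $|A|$, $|2A|$, $\dim(A)$ and $vol(A)$ are $F$--isomorphism invariants, and to take as starting point the combinatorial picture of a chain: $A$ is $F$--isomorphic to a union of intervals (``blocks'') $I_0<I_1<\cdots<I_m$ on $\Z$ whose partial sumsets $I_i+I_j$ overlap only in a controlled, chain--like pattern. The first task is to convert this into exact formulas. From $2A=\bigcup_{0\le i\le j\le m}(I_i+I_j)$ and the incidence pattern one reads off $|2A|$ as an explicit function of the block lengths $\ell_0,\dots,\ell_m$ and of which inter--block gaps are ``tight'' (the next block begins at exactly twice the current span, so its translate meets the current $2A$ in a single point) and which are ``loose''. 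The subtler companion is $vol(A)$: since $vol$ is the \emph{minimum} of the convex hull over the whole class, one must prove that compressing every loose gap down to the smallest value still compatible with the incidence pattern yields an $F$--isomorphic copy, and that no cheaper representative hides in a different block decomposition or in a higher phantom dimension. The rigidity already visible in small cases --- the class of $\{0,1,2,4,8\}$ is forced to consist of the sets $\{0,v,2v,4v,8v\}$, so it has volume exactly $9$ and dimension $1$ --- strongly suggests this compressed form is essentially unique, and it is the $c-2$ successive tight doublings of the span that produce the factor $2^{c-2}$.

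Writing $h(k,c,b)$ for the right--hand side of \eqref{eq:max}, I would then prove \eqref{eq:max} by induction on $|2A|$ (equivalently, on $(c,b)$ in lexicographic order). The base case is the whole strip $c=2$: there $|2A|\le 3k-4$, so Freiman's classical $3k-4$ theorem puts $A$ inside an arithmetic progression of length $|2A|-k+1$, whence $vol(A)\le |2A|-k+1=k+b=h(k,2,b)$. For the inductive step one peels the end of the chain. When the last block is a singleton placed at twice the span of the rest, deleting it gives a chain $A'$ with $|A'|=k-1$, $|2A|=|2A'|+(k-1)$, and $vol(A)=2\,vol(A')-1$; the identity $\binom{c+1}{2}=\binom{c}{2}+c$ shows $A'$ has parameters exactly $(k-1,c-1,b)$ --- in particular $b$ is unchanged --- so doubling the hypothesis $vol(A')\le h(k-1,c-1,b)$ back up reproduces $h(k,c,b)$ on the nose. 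When the last block is longer, or the preceding gap is loose, one instead deletes a single element: this keeps $c$, lowers $b$, raises $vol$ by only a controlled amount, and since $h(k,c,b)-h(k-1,c,b-1)=2^{c-1}$ the bound again propagates with room to spare. The proof then reduces to checking that these two moves cover every chain with $c\ge 3$ and that each obeys the stated inequality.

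For tightness and the last sentence of the conjecture I would exhibit the extremal chains outright: with $p=k-c+b+1$, take
\[
A=\{0,1,\dots,k-c\}\ \cup\ \{p,\,2p,\,4p,\,\dots,\,2^{c-2}p\},
\]
an interval of length $k-c+1$ followed by a geometric tail of $c-1$ points. A direct count gives $|A|=k$; summing the sizes $|I_i+I_j|$ and simplifying (again via $\binom{c+1}{2}=\binom{c}{2}+c$) gives $|2A|=ck-\binom{c+1}{2}+b+2$; and the forcing argument above shows this set is $1$--dimensional and already compressed, so $vol(A)=2^{c-2}p+1=h(k,c,b)$. Running this family through the induction is exactly what makes the inductive inequality tight.

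The step I expect to be the main obstacle is the exact evaluation of $vol(A)$ for an arbitrary chain: showing that the greedy ``smallest admissible gap'' representative is simultaneously $F$--isomorphic to $A$ and globally optimal, with no cheaper compression through a different block decomposition and no unexpected collapse of dimension. Once that structural fact is in hand, the induction and the extremal construction are careful but essentially mechanical bookkeeping of binomial and $\pm1$ terms --- bookkeeping that must nevertheless be done exactly, since \eqref{eq:max} is an equality precisely at the extremal chains.
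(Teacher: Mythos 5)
Your proposal only addresses the case of chains, which is also all the paper proves, so the restriction itself is fine; and your bookkeeping identities ($2h(k-1,c-1,b)-1=h(k,c,b)$, the base case $c=2$ via the $(3k-4)$--Theorem, and the extremal family $\{0,\dots,k-c\}\cup\{p,2p,\dots,2^{c-2}p\}$) all check out and match the paper's lower--bound construction via the operator $D$. But there is a genuine gap, and it sits exactly where you say you expect "the main obstacle": everything in your induction rests on an unproved structural claim about chains, namely that a chain is $F$--isomorphic to a union of interval blocks whose gaps are either "tight" (next block at twice the span) or "loose", that the greedy compressed representative realizes $vol(A)$, and that for $c\ge 3$ the only moves are "delete a tight singleton" or "delete one element keeping $c$". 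None of this is established, and the block model itself is not the right one: the paper's structure theorem for $|2A|\le 3k-4$ gives $A=A_1\circ P\circ A_2$ with $A_1,A_2$ \emph{stable} sets, i.e.\ unions of $d$--progressions with $d\ge 2$ (e.g.\ $\{0,4,5,8,9,12\}$), not intervals; and chains with a single odd element arise from the operator $D_x(A)=2\cdot A\cup\{x\}$, whose normalized form is a dilate plus one odd point and does not fit a tight/loose interval pattern at all.

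Concretely, the step that fails as written is the second inductive move. You assert that deleting one element "keeps $c$, lowers $b$, raises $vol$ by only a controlled amount", with the margin $h(k,c,b)-h(k-1,c,b-1)=2^{c-1}$ absorbing it. But you give no bound on how much the volume can drop relative to the drop in $|2A|$; quantifying this is precisely the content of the paper's Lemmas \ref{lem:int0}, \ref{lem:chain}, \ref{cor:doubling}, \ref{lem:unique}, \ref{lem:lunique} and \ref{lem:2prog2odd}, which show (via the stable decomposition, a density bound for stable sets, and a careful comparison with the competitor $D(B')$ where $B'=A\cup\{x-a\}$) that once a chain passes $3|A|-4$ every further extremal extension is forced to be $x=2\max$ (or its reflected version, or $D_x$ in the single--odd case) — so in fact $c$ increases by one at every step and your second move essentially never occurs for $c\ge 3$. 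Your proposal correctly identifies the skeleton of the argument and the extremal examples, but the forcing argument that makes the skeleton rigorous — the rigidity of extensions across and beyond the $3k-4$ threshold, including the $2$--progression and single--odd--number degeneracies — is missing, and it is the substance of the paper's proof.
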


Conjecture \ref{conj:main}  is proved for   $2k-1\le |2A|\le 3k-4$. For these values it is known that $A$ is $1$--dimensional and that its largest element is at most $k+b-1$, the bound being tight. Moreover the structure of  extremal sets with maximum element $k+b-1$ can be described in detail (see Freiman \cite{Freiman2009} and Section \ref{sec:3k-4}).

According to the notation in Conjecture \ref{conj:main}, given $k$ and $T\in [2k-1,{k\choose 2}+2]$ there are uniquely defined 
$$
c=c(k,T)\; \text{and}\; b=b(k,T)
$$
subject to the boundary conditions $2\le c\le k-2$ and $1\le b\le k-c-1$, or $b=0$ and $c=2$, such that $T$ can be expressed as the right--hand side of \eqref{eq:2}.   If $A$ has cardinality $k$ and doubling $|2A|=T$ then we call $c=c(k,T)$ the {\it doubling constant} of $A$. Thus, for example, the doubling constant is $2$ if $|2A|=3|A|-4$ and it is $3$ if $|2A|=3|A|-3$, according to the structural change on $A$ on these values of its doubling. We also denote by
\begin{equation}\label{eq:mudef}
\mu (k,T)= 2^{c-2}(k-c+b+1),
\end{equation}
the conjectured maximum volume minus one of a set with cardinality $k$ and doubling $T$.

The main result of this paper proves Conjecture \ref{conj:main} for a general class of sets called chains. Roughly speaking, chains are sets with maximum volume among all sets which can be obtained by a sequence of sets of the same nature starting with an arithmetic progression of length three (see Section \ref{sec:chains} for precise definitions). 

Moreover, the structure of chains can be described.  We denote by $\Phi$ a family of operators on sets which either add to a set $X$ in normal form the element $2\max (X)$ or multiply all of its elements by $2$ and add an odd number (see the end of Section \ref{sec:lb} for a precise definition of the family $\Phi$ .) The main result in this paper is the following one:

\begin{theorem}\label{thm:chain} Let $A$ be a chain with $k=|A|$ and $2k-1\le |2A|\le {k\choose 2}+2$. Then
$$
vol (A)= \mu (k,T)+1.
$$
Moreover,   there are subchains $B\subseteq B'\subseteq A$ with $|2B|\le 3|B|-4$ and an integer $s\ge 0$ such that
$$
A\cong_F \phi_{s}\phi_{s-1} \cdots \phi_1(B'),
$$
where each $\phi_i\in \Phi$ and  $|B'|\le |B|+1$.
\end{theorem}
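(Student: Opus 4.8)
The plan is to proceed by induction on the length of the chain, using the recursive structure that defines chains. First I would set up the base case: when $|2A|\le 3|A|-4$, Conjecture \ref{conj:main} is already known (as recalled in the excerpt), so $vol(A)=\mu(k,T)+1$ holds, $A$ is $1$-dimensional, and one can take $B=B'=A$ with $s=0$; here $c(k,T)=2$ and the structural description is the one referred to in Section \ref{sec:3k-4}. The inductive step then considers a chain $A$ with $|2A|\ge 3|A|-3$, so that $A$ is obtained from a shorter chain $X$ (in normal form) by applying one operator $\phi\in\Phi$, i.e. either $A\cong_F X\cup\{2\max(X)\}$ or $A\cong_F 2X+r$ for an odd $r$ chosen so that the result is again a chain of the prescribed type. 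In either case $|A|=|X|+\varepsilon$ with $\varepsilon\in\{0,1\}$, and the key arithmetic task is to track how the parameters $(k,T,c,b)$ transform under $\phi$ and to check that the formula \eqref{eq:mudef} for $\mu$ transforms consistently, i.e. that $\mu(k,T)+1 = 2^{\,c-2}(k-c+b+1)+1$ is exactly doubled (or incremented in the controlled way) matching the doubling of the volume under $\phi$.

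The heart of the argument is the volume computation. For the ``multiply by $2$ and add an odd number'' operator, a set $B'\subset\Z^d$ that is $F$-isomorphic to $X$ and achieves its minimal convex-hull cardinality gives, after the same dilation, an $F$-isomorphic model of $A$ whose convex hull has exactly twice the cardinality minus the lower-dimensional correction; conversely, a minimal model of $A$ must, by the rigidity forced by the chain structure, descend to a model of $X$, giving the matching lower bound $vol(A)\ge 2\,vol(X)-O(1)$. For the ``append $2\max(X)$'' operator the volume increases by the controlled additive amount dictated by $\mu$. Combining with the inductive hypothesis $vol(X)=\mu(|X|,|2X|)+1$ yields $vol(A)=\mu(k,T)+1$. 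The structural statement is then obtained by unwinding the induction: peeling off the operators $\phi_s,\dots,\phi_1$ one at a time leads back to a chain with $|2B|\le 3|B|-4$; the auxiliary set $B'$ with $|B'|\le|B|+1$ appears because the very last descent step (from doubling constant $3$ down to $2$) may require one extra ``append'' operation rather than a clean dilation, which is precisely the case $b=0$, $c=2$ versus $c=3$ boundary discussed after Conjecture \ref{conj:main}.

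The main obstacle I expect is the lower bound on the volume, i.e. showing that every $F$-isomorphic model of a chain $A$ in $\Z^d$ has convex hull at least $\mu(k,T)+1$ in cardinality, and in particular that chains remain $1$-dimensional. This requires exploiting the defining extremal property of chains --- that each is of maximum volume among sets obtainable by the given recursive procedure --- to rule out the possibility that passing to higher dimension, or to a non-standard model, could produce a smaller convex hull; equivalently, one must show the dilation/append construction that realizes the claimed volume is forced, not merely available. I would handle this by a careful analysis of how Freiman isomorphisms interact with the operators in $\Phi$: a model of $X\cup\{2\max(X)\}$ or of $2X+r$ imposes strong additive relations that, together with the normal-form normalization, pin down the model up to the expected affine ambiguity, so that its convex hull cannot beat the inductively optimal one for $X$. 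The bookkeeping of the parameters $c$ and $b$ across the boundary $|2A|=3|A|-4 \leftrightarrow 3|A|-3$ (where $c$ jumps from $2$ to $3$) is the other delicate point, and it is exactly what forces the appearance of $B'$ with $|B'|\le|B|+1$ in the statement.
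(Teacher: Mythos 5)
The proposal has a genuine and fatal circularity at the start of the inductive step. You write that a chain $A$ with $|2A|\ge 3|A|-3$ ``is obtained from a shorter chain $X$ \dots by applying one operator $\phi\in\Phi$, i.e.\ either $A\cong_F X\cup\{2\max(X)\}$ or $A\cong_F 2X+r$.'' But the definition of a chain only guarantees $A=A_{k-1}\cup\{x\}$ for \emph{some} $x$ outside $[A_{k-1}]$, chosen to maximize volume subject to the nesting condition; nothing in the definition says $x=2\max(A_{k-1})$ or that $A$ is a dilate plus one odd element. Establishing exactly this — that the volume-maximality condition (iii) \emph{forces} the added element to be $2\max$ (or its reflected analogue), or forces the dilation structure when $A$ has a single odd element — is the entire technical content of the theorem and occupies Lemmas \ref{lem:int0} through \ref{lem:oneodd} of the paper (bounding $|2A\cap(x+A)|$ via the stable decomposition and Lemma \ref{lem:hole}, ruling out $x\in(2a,4a)\setminus\{4a\}$ by comparison with the competitor $D(A\cup\{x-a\})$, treating left extensions via the reflexion, and handling separately the case where both stable components are $2$--progressions). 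Your plan takes this forced structure as its hypothesis, so the induction never engages with the actual difficulty; the subsequent $\mu$-bookkeeping and the ``rigidity'' discussion of Freiman isomorphisms, while plausible in outline, rest on an unproved premise.

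Two further points. First, you identify the main obstacle as showing $vol(A)\ge\mu(k,T)+1$; in fact that direction follows from the explicit constructions of Section \ref{sec:lb} (which are chains), and the hard direction is the upper bound, i.e.\ that no admissible extension of a subchain can produce a larger volume than the $D$/$D_x$ iterates — again the content of the forcing lemmas. Second, your explanation of $|B'|\le|B|+1$ via a ``$c=2$ versus $c=3$ boundary'' is in the right spirit but imprecise: in the paper $B=A_t$ is the largest subchain with $|2A_t|\le 3|A_t|-4$, and $B'=A_{t+1}$ is needed because the single step crossing that threshold may add an element $x<2\max(A_t)$ (when both stable components of $A_t$ are $2$--progressions, Lemma \ref{cor:doubling}), so the $\Phi$-iteration can only be guaranteed to start at $A_{t+1}$. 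To repair the proposal you would need to prove, for each one-element extension permitted by the chain definition, that extremality forces it into $\Phi$ — which is essentially to reconstruct the paper's Section \ref{sec:chains}.
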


As we have already mentioned,  the structure of $B$ in  Theorem \ref{thm:chain}  is already well--known by the so--called $(3k-4)$--Theorem, which we recall in Section \ref{sec:3k-4}.
Therefore  Theorem \ref{thm:chain} gives a precise structural description of chains (the case in which $|B'|=|B|+1$ is clarified as well in Section \ref{sec:chains}.)   One remarkable feature of Theorem  \ref{thm:chain} is that it holds for chains $A$ with  $|2A|=c|A|$ for the doubling constant $c$ up to  $(|A|-1)/2$. 

One consequence of Theorem \ref{thm:chain} is to prove Conjecture \ref{conj:main} for the class ${\mathcal C}$ of chains. In particular, for every $k\ge 4$ and $T\in [2k-1,{k\choose 2}+2]$, we have
\begin{equation}\label{eq:vmu}
vol (k,T)\ge \mu (k,T)+1.
\end{equation}

The paper is organized as follows. We give general terminology and basic results in Section \ref{sec:term}. The $(3k-4)$--Theorem of Freiman is recalled in Section \ref{sec:3k-4}.  In Section \ref{sec:lb} we give a construction of sets with volume $\mu (k,T)+1$ for every suitable value of $T$, giving the lower bound \eqref{eq:vmu} for $vol (k,T)$.   The definition of chain and the  lemmas leading to the proof of the main result  are given in Section \ref{sec:chains}.   The proof of Theorem \ref{thm:chain} is given in Section \ref{sec:proof}. The proof is based on structural properties of chains, and the nature of the result calls for elementary methods. We conclude the paper with some final remarks in Section \ref{sec:final}.

\section{Notation and preliminary results}\label{sec:term}

A set $A=\{a_0<a_1<\cdots <a_{k-1}\}$ of integers  is in {\it normal} form if $a_0=0$ and $\gcd (A)=1$.  
The convex hull of a set of integers is the smallest interval containing it. We call a {\it hole} of $A$ an element in its convex hull not contained in $A$.

A $d$--progression is a set of the form $\{a,a+d,a+2d,\ldots ,a+(k-1)d\}$ for some $d\ge 1$ and $k\ge 1$. We note that, with this definition, a singleton set  $\{a\}$ is a $d$--progression for each $d\ge 1$.

We parametrize the values of the doubling $T=T(A)=|2A|$ of a set $A$ of integers with cardinality $k=|A|$ as follows. For each $k\ge 4$ and each $c\in [2,k-2]$ we denote by $I_c$ the integer interval
$$
I_{c,k}=ck-{c+1\choose 2}+2+\left[ 1, k-c-1\right],
$$
including the value $2k-1$ in $I_{2,k}$, so that their disjoint union
$$
\bigcup_{c=2}^{k-1} I_{c,k}=\left[2k-1, {k\choose 2}+2\right],
$$
is a range of values of  $T(A)$. Given $k$ and $T$ we denote by $c(k,T)$ the value of $c$ for which $T\in I_{c,k}$ and we write
\begin{equation}\label{eq:kbdef}
T=ck-{c+1\choose 2}+b+2,
\end{equation}
where $b=b(k,T)\in [1,k-c-1]$. We call $c=c(k,T)$ the  doubling constant of the set $A$.

We next recall a key result on the additive  dimension of a set.  Let  $A=\{a_1,\ldots ,a_k\}\subset G$,  be a finite subset in an abelian group $G$. Let $e_1,\ldots ,e_k$ denote a basis of the $k$--dimensional real vector space $\R^k$. To each  relation of the form $a_i+a_j=a_r+a_s$ satisfied by the elements of $A$ we associate the vector $e_i+e_j-e_r-e_s\in \R^k$. Let us denote by $\lambda (A)$ the dimension of the subspace of $\R^k$ generated by all these vectors. This dimension is related to the additive dimension of $A$ by the following Theorem of Konyagin and Lev  \cite{KL2000}. 

\begin{theorem}[Konyagin, Lev] \label{thm:kl}The additive dimension of a set $A$ with cardinality $k$ is
$$
dim (A)=k-1-\lambda (A).
$$
\end{theorem}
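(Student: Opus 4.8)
\emph{Proof plan.} Write $A=\{a_1,\dots,a_k\}$, put $\mathbf a=(a_1,\dots,a_k)\in\Z^k$ and $\mathbf 1=(1,\dots,1)\in\Z^k$, and let $R\subseteq\R^k$ be the span of all relation vectors $e_i+e_j-e_r-e_s$ coming from relations $a_i+a_j=a_r+a_s$, so that $\lambda(A)=\dim R$. Two elementary remarks underlie everything. Each relation vector is orthogonal to $\mathbf a$ --- this is merely the relation restated --- so $R\subseteq\mathbf a^{\perp}$; and each relation vector has coordinate sum $0$, so $R\subseteq\mathbf 1^{\perp}$ and hence $\lambda(A)\le k-1$. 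The inclusion $R\subseteq\mathbf 1^{\perp}$ is what produces the term $-1$ in the formula and will be the only genuinely delicate point; the rest is linear algebra done carefully with affine, not merely linear, spans. The plan is to establish the two inequalities $\dim(A)\le k-1-\lambda(A)$ and $\dim(A)\ge k-1-\lambda(A)$.

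For the upper bound I would start from a set $B=\{b_1,\dots,b_k\}\subset\Z^d$ that is $F$-isomorphic to $A$ via $a_i\mapsto b_i$ and lies in no hyperplane, and show $d+\lambda(A)\le k-1$. Consider the linear map $\Psi:\R^k\to\R^d$ with $e_i\mapsto b_i$. By the $F$-isomorphism $\Psi$ annihilates every relation vector, so $R\subseteq\ker\Psi\cap\mathbf 1^{\perp}$. On the other hand, since $B$ lies in no hyperplane the differences $b_i-b_1$ span $\R^d$; as $b_i-b_1=\Psi(e_i-e_1)$ with $e_i-e_1\in\mathbf 1^{\perp}$, the restriction $\Psi|_{\mathbf 1^{\perp}}$ is surjective, so $\dim\bigl(\ker\Psi\cap\mathbf 1^{\perp}\bigr)=(k-1)-d$. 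Combining the two gives $\lambda(A)=\dim R\le k-1-d$.

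For the lower bound I would exhibit an $F$-isomorphic copy of $A$ in $\Z^{\,k-1-\lambda(A)}$ lying in no hyperplane. Let $N\subseteq\Z^k$ be the subgroup generated by the relation vectors and $M$ its saturation in $\Z^k$; then $\Z^k/M$ is free of rank $k-\lambda(A)$ (the relation vectors generate a subgroup of rank $\lambda(A)$). Writing $q:\Z^k\to\Z^k/M$ for the projection and $\hat e_i=q(e_i)$, the claim is that $a_i\mapsto\hat e_i$ is an $F$-isomorphism onto $\{\hat e_1,\dots,\hat e_k\}$. Indeed $M\subseteq R\subseteq\mathbf a^{\perp}$, so a relation $\hat e_i+\hat e_j=\hat e_r+\hat e_s$ forces $e_i+e_j-e_r-e_s\in M$, hence $\mathbf a\cdot(e_i+e_j-e_r-e_s)=0$, i.e.\ $a_i+a_j=a_r+a_s$; the converse holds because every relation vector lies in $N\subseteq M$; and $\hat e_i\neq\hat e_j$ for $i\neq j$ because $\mathbf a\cdot(e_i-e_j)=a_i-a_j\neq 0$ while $M\subseteq\mathbf a^{\perp}$. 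Finally the $\hat e_i$ all lie in $q(H)$ with $H=\{x\in\R^k:\sum x_i=1\}$; since $H=e_1+\mathbf 1^{\perp}$ and the $\R$-span of $M$ equals $R$, which lies in $\mathbf 1^{\perp}$, the image $q(H)$ is an affine subspace of dimension $(k-1)-\lambda(A)=:d$, and $\{\hat e_1,\dots,\hat e_k\}$ affinely spans it, because the $e_i$ affinely span $H$. Thus the $\hat e_i$ lie in a coset of a rank-$d$ subgroup of $\Z^k/M$ and affinely span its affine hull; identifying that coset affinely with $\Z^d$ --- an operation that preserves Freiman isomorphism --- would produce a copy of $A$ in $\Z^d$ lying in no hyperplane, giving $\dim(A)\ge d$.

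The hard part, as flagged above, is not producing these maps but the bookkeeping that delivers the exact value $k-1-\lambda(A)$ instead of $k-\lambda(A)$: one must consistently restrict to the hyperplane $\mathbf 1^{\perp}$, exploiting that relation vectors are ``balanced''. A secondary point needing care is the saturation step, where one checks that passing to the torsion-free quotient $\Z^k/M$ neither identifies two of the $\hat e_i$ nor introduces spurious additive relations among them --- both controlled by $R\subseteq\mathbf a^{\perp}$ together with the distinctness of the elements of $A$.
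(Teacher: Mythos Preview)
The paper does not prove this theorem; it is quoted from Konyagin and Lev \cite{KL2000} and used as a black box, so there is no ``paper's own proof'' to compare against. Your argument is essentially the standard one and is correct: the upper bound via the surjectivity of $\Psi|_{\mathbf 1^{\perp}}$ is clean, and for the lower bound the quotient $\Z^k/M$ by the saturation of the relation lattice is exactly the right object; your checks that $M\subseteq R\subseteq\mathbf a^{\perp}\cap\mathbf 1^{\perp}$ control both injectivity and the absence of spurious relations are the crucial points, and they go through. The only place where a reader might want one more line is the final identification: you should make explicit that the coordinate-sum map $\sigma:\Z^k\to\Z$ descends to $\bar\sigma:\Z^k/M\to\Z$ (since $M\subseteq\ker\sigma$), that $\ker\bar\sigma=(\ker\sigma)/M$ is free of rank $k-1-\lambda(A)$ because $M$ is saturated in $\ker\sigma$, and that translating the coset $\bar\sigma^{-1}(1)$ to $\ker\bar\sigma\cong\Z^{d}$ is an affine bijection of abelian groups and hence a Freiman isomorphism. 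With that sentence added, the proof is complete.
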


For example, the set  $\{a_1,a_2,a_3,a_4\}=\{0,1,2,4\}$ has dimension one since it contains the relations $a_1+a_3=2a_2$ and $a_1+a_4=2a_3$ which correspond to two independent vectors in $\R^4$, while $\{a_1,a_2,a_3,a_4'\}=\{0,1,2,5\}$ is $2$--dimensional.

We will often use the following  simple consequence of Theorem \ref{thm:kl}. 

\begin{corollary} Let $A$ be a $1$--dimensional set in normal form and $x>\max (A)$. Then $A\cup \{x\}$ is $1$--dimensional if and only if  $x\in 2A-A$.
\end{corollary}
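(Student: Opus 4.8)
The plan is to translate the statement into the language of Theorem~\ref{thm:kl} and count relation vectors. Write $A=\{a_1,\dots,a_k\}$; since $\dim(A)=1$, Theorem~\ref{thm:kl} gives $\lambda(A)=k-2$, that is, the vectors $e_i+e_j-e_r-e_s$ arising from the equalities $a_i+a_j=a_r+a_s$ span a subspace $W\subseteq\R^k$ of dimension exactly $k-2$. Put $A'=A\cup\{x\}$, a set of size $k+1$, and let $e_{k+1}$ be the basis vector of $\R^{k+1}\supseteq\R^k$ indexing $x$. Every relation among elements of $A$ is a relation in $A'$, so (viewing $W$ inside $\R^{k+1}$, with last coordinate $0$) $W$ lies in the span of the relation vectors of $A'$ and $\lambda(A')\ge k-2$. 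On the other hand $|A'|\ge 2$ forces $\dim(A')\ge 1$, hence $\lambda(A')\le k-1$ by Theorem~\ref{thm:kl}. Therefore $A'$ is $1$--dimensional exactly when $\lambda(A')=k-1$, and the whole claim reduces to: some relation vector of $A'$ lies outside $W$ if and only if $x\in 2A-A$.

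Next I would classify the relations $\alpha+\beta=\gamma+\delta$, with $\alpha,\beta,\gamma,\delta\in A'$, in which $x$ occurs. Since $x>\max(A)$ we have $x\notin A$ and $2x>2\max(A)$, so $x$ cannot occur twice on the same side (that would force $2x=\gamma+\delta\le 2\max(A)$). If $x$ occurs once on each side the relation is trivial, $\beta=\delta$, giving the zero vector. If $x$ occurs exactly once, say $x+a_i=a_r+a_s$ with $a_i,a_r,a_s\in A$, then $x=a_r+a_s-a_i\in 2A-A$; conversely any such representation yields the relation $x+a_i=a_r+a_s$ with associated vector $e_{k+1}+e_i-e_r-e_s$. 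Thus $A'$ has a relation vector with nonzero $e_{k+1}$--coordinate --- equivalently, one outside $W$ --- if and only if $x\in 2A-A$.

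Finally I would assemble the two directions. If $x\notin 2A-A$, every relation of $A'$ involving $x$ is trivial, so the relation vectors of $A'$ are exactly those of $A$, whence $\lambda(A')=k-2$ and $\dim(A')=(k+1)-1-(k-2)=2$; in particular $A'$ is not $1$--dimensional. If $x\in 2A-A$, choose $\v=e_{k+1}+e_i-e_r-e_s$ as above; its $e_{k+1}$--coordinate equals $1$ (no cancellation occurs, since none of $a_i,a_r,a_s$ equals $x$), so $\v\notin W$ and the span of the relation vectors of $A'$ contains $W\oplus\R\v$, giving $\lambda(A')\ge k-1$. Together with $\lambda(A')\le k-1$ this yields $\lambda(A')=k-1$, i.e. $\dim(A')=1$.

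I do not expect a real obstacle: the content is the exhaustive but elementary case analysis of which relations can involve the new, strictly largest element $x$, together with the observation that the corresponding relation vector genuinely has a nonzero last coordinate and so escapes $W$ --- which is exactly where the hypotheses ``$A$ in normal form'' and ``$x>\max(A)$'' (hence $x\notin A$) are used.
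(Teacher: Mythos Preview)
Your proof is correct and follows the same approach as the paper, namely a direct application of Theorem~\ref{thm:kl}. The paper's argument is just a one--line version of yours: it simply observes that, by Theorem~\ref{thm:kl}, $A\cup\{x\}$ is $1$--dimensional if and only if $x$ participates in an additive relation $a+x=a'+a''$ with $a,a',a''\in A$, which is precisely $x\in 2A-A$. Your write--up spells out the underlying linear algebra (the dimension count $k-2\le\lambda(A')\le k-1$ and the case analysis on how many times $x$ can appear in a relation) that the paper leaves implicit. One small remark: the hypothesis that $A$ is in normal form is not actually used anywhere in your argument (nor in the paper's); only $x>\max(A)$ matters.
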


\begin{proof} According to Theorem \ref{thm:kl}, the set $A\cup \{x\}$ is $1$--dimensional if and only if  $x$ is involved in an additive relation with the elements of $A$, that is, $a+x=a'+a''$  for some $a,a',a''\in A$. 	
	\end{proof}

\section{Stable sets and the $(3k-4)$--Theorem}\label{sec:3k-4}

We recall in this Section the $(3k-4)$--Theorem of Freiman giving the structure of extremal sets whose doubling is at most $3k-4$. We first need some definitions.

By a segment we mean a set of consecutive integers, denoted by $[a,a+k-1]=\{a,a+1,\ldots,a+k-1\}.$ The length of a set $A$ is
$$
\ell (A)=\max(A)-\min (A)+1.
$$
Given two sets $A, B$ in normal form, we denote their concatenation by
$$
A\circ B=A\cup (\max (A)+B).
$$
We thus have
\begin{align*}
\ell (A\circ B)&=\ell (A)+\ell (B)-1,\\
|A\circ B|&=|A|+|B|-1.
\end{align*}

A set $A=\{a_0= 0<a_1<\cdots <a_{k-1}\}$, $k\ge 2$, of integers is {\it stable} if
$$
2A\cap [0,a_{k-1}]= A  \;\;  \text{ and }\;\;  \{1,a_{k-1}-1\}\cap A=\emptyset.
$$
By convention we  say that $A=\{0\}$ is also a  stable set.
We say that a set $A$ is {\it right} stable if its reflexion $A^-=-A+a_{k-1}$ is stable. The typical examples of stable sets are $d$--progressions with $d\ge 2$, which are also right--stable. Actually, a stable set is always a union of $d$--progressions with difference $d=a_1$. For example, $\{0,4,5,8,9,12\}$ is stable (but not right--stable) and $\{0, 2, 3,6\}$ is right--stable (but not stable).

In fact, stability concerns the property that the doubling of $A$ leaves $A$ invariant in the interval $[0,\max(A)]$. We remark that, in our current definition of stable sets with more than one element,  we  impose the additional property that $A$ does not contain $1$ (so that  $A$ is not a segment)  and $A$ does not contain $a_{k-1}-1$. One reason to include this additional condition to the definition of stable sets is   the following simple Lemma, which gives the maximum density of initial segments of an stable set.

\begin{lemma}\label{lem:hole} Let   $A=\{0=a_0<a_1\cdots<a_{k-1}\}$ be a stable set. For each $x\in [0,a_{k-1}]$ we have
		$$
		|A\cap [0,x]|\le \left\lceil \frac{x+1}{2} \right\rceil.
		$$		
	\end{lemma}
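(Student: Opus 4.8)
The plan is to exploit two consequences of stability: first, that $A$ is closed under addition as long as the sums remain in $[0,a_{k-1}]$, since $2A\cap[0,a_{k-1}]=A$; and second, that $a_{k-1}-1\notin A$. The heart of the argument will be a reflection inequality: \emph{for every hole $c$ of $A$ with $c\le a_{k-1}$ one has $|A\cap[0,c]|\le\lfloor(c+1)/2\rfloor$.} To prove it, observe that for $a\in A\cap[0,c]$ the integer $c-a$ lies in $[0,c]$ and cannot belong to $A$: otherwise $a+(c-a)=c\in 2A\cap[0,a_{k-1}]=A$, contradicting that $c$ is a hole. Hence $a\mapsto c-a$ injects $A\cap[0,c]$ into the set of holes contained in $[0,c]$, and since $A\cap[0,c]$ and its complement in $[0,c]$ partition a set of $c+1$ integers, this forces $2|A\cap[0,c]|\le c+1$.

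This already settles the lemma when $x$ is a hole, because $\lfloor(x+1)/2\rfloor\le\lceil(x+1)/2\rceil$, so it remains to bound $|A\cap[0,x]|$ for $x\in A$ (the convention $A=\{0\}$ being trivial). If $x=\max A=a_{k-1}$, I would use that $a_{k-1}-1$ is a hole: then $|A\cap[0,x]|=|A\cap[0,a_{k-1}-1]|+1\le\lfloor a_{k-1}/2\rfloor+1=\lceil(x+1)/2\rceil$. Otherwise let $q$ be the largest integer with $[x,q]\subseteq A$. Stability forces $q\le a_{k-1}-2$, since $q\ge a_{k-1}-1$ would put $a_{k-1}-1\in[x,q]\subseteq A$; hence $q+1$ is a hole with $q+1\le a_{k-1}$. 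Applying the reflection inequality at $c=q+1$ and removing the $q-x$ elements of the run $[x+1,q]$ (noting $q+1\notin A$) yields $|A\cap[0,x]|=|A\cap[0,q+1]|-(q-x)\le\lfloor q/2\rfloor+1-(q-x)=x-\lceil q/2\rceil+1\le x-\lceil x/2\rceil+1=\lceil(x+1)/2\rceil$, where the last step uses $q\ge x$.

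The only genuine obstacle is the case $x\in A$ lying inside a long run of consecutive integers of $A$: a naive reflection at $x$ is useless there, since $x-a$ need not be a hole. The resolution is to reflect instead at the hole $q+1$ immediately above the run and then discard the run, which is ``free'' because it contributes at density $1$; it is exactly the hypothesis $a_{k-1}-1\notin A$ that guarantees such a hole exists above the run. Everything else is a short counting exercise with floors and ceilings.
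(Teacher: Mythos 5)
Your proof is correct and follows essentially the same route as the paper's: the bound $\lceil c/2\rceil=\lfloor (c+1)/2\rfloor$ at a hole $c$ via the pairing $a\leftrightarrow c-a$ (the paper phrases it as ``at most one of $i$ and $x-i$ lies in $A$ since $x\notin 2A$''), and then, for $x\in A$, passing to the first hole above $x$ (your $q+1$, the paper's $y$) and subtracting the run of consecutive elements. Your write-up is slightly more explicit about why that hole exists below $a_{k-1}$, using $a_{k-1}-1\notin A$, but the argument is the same.
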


\begin{proof} Let $A(x)=|A\cap [0,x]|$. If $x$ is a hole in $A$ then, since $A$ is stable,  $x\not\in 2A$.  Hence, at most one among $i$ and $x-i$ belongs to $A$ for each $0\le i\le x$, and  $A(x)\le \lceil x/2 \rceil$.
	
	In particular, since $a_{k-1}-1$ is a hole in $A$, we have  $A(a_{k-1})\le A(a_{k-1}-1)+1\le   \left\lceil \frac{a_{k-1}+1}{2} \right\rceil$. 
	
	Suppose that $x\in A$, $x<a_{k-1}$, and let $y>x$ be the smallest hole in $A$. We have,
		$$
		A(x)=A(y)-(y-x-1)\le \left\lceil\frac{y}{2}\right\rceil-(y-x-1)\le \left\lceil \frac{x+1}{2} \right\rceil.
		$$
\end{proof}

We say that a stable set  $A$ is {\it dense} if $|A|=\lceil (\max(A)+1)/2\rceil$. Arithmetic progressions of difference two are examples of dense stable sets. The stable set $\{0\}$ is also dense. 

We shall use the following structural characterization from Freiman \cite{Freiman2009} of extremal sets with  doubling at most $3k-4$.

\begin{theorem}[Freiman] \label{thm:3k-4} Let $A\subset \Z$ be an extremal set in normal form with $|A|=k$ and 
	$$
	|2A|=2k-1+b, \; 0\le b\le k-3.
	$$
	Then
	\begin{equation}\label{eq:stable}
	A=A_1\circ  P \circ A_2,
	\end{equation}
	where
	\begin{enumerate}
		\item[(i)] $A_1$  is stable and $A_2$ is right stable,
		\item[(ii)] $P$ is a segment with $|P|\ge k-b$, and
		\item[(iii)] $\ell (A)=k+b$. In particular $A$ is $1$--dimensional.
	\end{enumerate}
	
	Moreover
	\begin{equation}\label{eq:stable2}
	2A=A_1\circ P' \circ A_2,
	\end{equation}
	where  $P'$ is a segment with $|P'|\ge |2A|-b$.
\end{theorem}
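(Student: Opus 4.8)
The plan is to derive the refined block structure from the classical (unrefined) $(3k-4)$ theorem of Freiman together with the extremality hypothesis, and then to extract the decomposition by a hole--counting argument carried out on $A$ and on $2A$ simultaneously.

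First I would normalise $A$ and invoke the classical $(3k-4)$ theorem: if $|2A|\le 3|A|-4$ then $A$ is contained in an arithmetic progression of length $|2A|-|A|+1$. Here this length is $(2k-1+b)-k+1=k+b$, and since $A$ is in normal form that progression must have difference $1$, so $A\subseteq[0,k+b-1]$; in particular $A$ is $1$--dimensional (as $k\ge 4$), and therefore $vol(A)=\max(A)+1=\ell(A)\le k+b$. On the other hand the construction of Section~\ref{sec:lb} gives $vol(|A|,|2A|)\ge\mu(k,T)+1$, which for the doubling constant $c=2$ equals $(k-2+b+1)+1=k+b$ by \eqref{eq:mudef}; as $A$ is extremal we get $vol(A)=k+b$ and hence $\ell(A)=a_{k-1}+1=k+b$. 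This is part~(iii), and it also records that $A=\{0=a_0<a_1<\cdots<a_{k-1}=k+b-1\}$ has exactly $b$ holes in its convex hull. Note that from here on only $\ell(A)=k+b$ and $|2A|=2k-1+b$ are used, extremality having done its job.

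Next I would pass to $2A$. Since $0,a_{k-1}\in A$ we have $A\cup(a_{k-1}+A)\subseteq 2A\subseteq[0,2a_{k-1}]$, and the interval $[0,2a_{k-1}]$ contains $2k+2b-1$ integers, so $|2A|=2k-1+b$ forces $2A$ to have exactly $b$ holes. Writing $H$ for the set of $b$ holes of $A$, the set $A\cup(a_{k-1}+A)$ has $2k-1$ elements and misses exactly the $2b$ positions of $H\cup(a_{k-1}+H)$ inside $[0,2a_{k-1}]$; since $2A$ misses only $b$ of them, exactly $b$ of these positions lie in $2A$, each then being a ``genuine'' sum $a_i+a_j$ with $1\le i\le j\le k-2$ (it cannot involve $a_0$ or $a_{k-1}$). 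The heart of the argument is to show that this forces the holes of $A$ to split as $H=H_1\sqcup H_2$ with $\max H_1<\min H_2$ and with every integer of $[\max H_1,\min H_2]$ lying in $A$, in such a way that $H_1$ is exactly the set of holes of $2A$ below $\min H_2$ while $a_{k-1}+H_2$ is exactly the set of holes of $2A$ above $a_{k-1}+\max H_1$ --- informally, the lower block of holes of $A$ survives in $2A$, compensated by the upper block being filled, and symmetrically. Granting this, take $A_1$ to be the portion of $A$ below the central run of consecutive integers (with $A_1=\{0\}$ if there is no lower block), $P$ the central run, and $A_2$ the upper portion renormalised; then $2A\cap[0,\max A_1]=A_1$ because the only holes of $2A$ that far down are those of $H_1$, and after fixing the boundary conventions so that $1\notin A_1$ and $\max(A_1)-1\notin A_1$ (by absorbing any initial segment of $A$ into $P$) the set $A_1$ is stable, and by symmetry $A_2$ is right stable; this gives~(i). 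For~(ii) one counts holes: $P$ carries none, and Lemma~\ref{lem:hole} applied to the stable sets $A_1$ and $A_2^-$ gives $|A_i|\le 1+(\ell(A_i)-|A_i|)$, i.e.\ $|A_i|$ is at most one more than the number of holes of $A_i$; hence $|A_1|+|A_2|\le b+2$ and $|P|=\ell(A)-\ell(A_1)-\ell(A_2)+2\ge k-b$. Finally the companion decomposition \eqref{eq:stable2} of $2A$ falls out of the same analysis: $2A$ agrees with $A_1$ on $[0,\max A_1]$, is a full segment $P'$ on the central stretch, and on the top agrees with the renormalisation of $a_{k-1}+A_2$; since $P'$ is the only hole--free block and $2A$ has just $b$ holes, $|P'|\ge|2A|-b$.

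The step I expect to be the main obstacle is the splitting claim of the previous paragraph: proving that a hole of $A$ which gets filled in $2A$ must belong to the upper block, and dually that a hole of $2A$ lying below the central segment must already be a hole of $A$. The natural attack is a minimality/rearrangement argument --- if a low hole of $A$ were filled in $2A$ while a high one stayed empty, one should be able to slide the two end portions of $A$ past the central segment so as to strictly shorten $\ell(A)$ or $|2A|$, contradicting $\ell(A)=k+b$ and $|2A|=2k-1+b$ --- and the delicate part is making this precise and, above all, correctly locating the three--way boundary between $H_1$, $P$ and $H_2$. An alternative is induction on $k$ (or on $b$), peeling off an extreme element of $A$; the price there is tracking how the doubling parameter $b$ and the extremality of $A$ behave under the peeling, which carries its own bookkeeping.
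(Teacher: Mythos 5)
The paper itself does not prove this theorem: it is imported from Freiman \cite{Freiman2009}, so there is no internal argument to compare yours against, and your attempt has to be judged on its own. The preliminary parts of your plan are sound. Part (iii) does follow from the classical $(3k-4)$ theorem plus extremality against the explicit example $\{0\}\cup[b+1,k+b-1]$ (though for the $1$--dimensionality you should invoke the inequality $|2A|\ge 3|A|-3$ valid for sets of additive dimension at least $2$; mere containment in an interval does not bound the Freiman dimension, as $\{0,1,2,5\}$ shows). The hole bookkeeping is also correct: $2A$ has exactly $b$ holes in $[0,2a_{k-1}]$, and exactly $b$ of the $2b$ positions in $H\cup(a_{k-1}+H)$ must be realized as sums $a_i+a_j$ with $1\le i\le j\le k-2$. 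Your derivation of (i), (ii) and \eqref{eq:stable2} from the splitting claim, including the count $|A_1|+|A_2|\le b+2$ via Lemma \ref{lem:hole}, is arithmetically correct.

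The genuine gap is the splitting claim itself, which you introduce with ``granting this'' and concede at the end to be the main obstacle: that $H$ decomposes as $H_1\sqcup H_2$ with $\max H_1<\min H_2$, that the holes of $2A$ below the central segment are exactly $H_1$ and those above are exactly $a_{k-1}+H_2$, and that $A$ is a full segment in between. This is not a technical detail to be filled in later; it \emph{is} the theorem. Everything distinguishing Freiman's refined statement from the classical $(3k-4)$ bound --- the existence of the decomposition $A=A_1\circ P\circ A_2$, the stability of the two end blocks, and the companion structure of $2A$ --- lives in that claim, and neither of the two strategies you name (a rearrangement argument shortening $\ell(A)$ or $|2A|$, or an induction peeling off an extreme element) is carried out even in outline. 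In particular you give no argument ruling out an interleaving of surviving and filled holes, which is exactly the configuration the stability of $A_1$ and $A_2$ forbids. As it stands the proposal is an accurate reduction of the theorem to its hardest step together with a correct deduction of the remaining assertions from that step, but it is not a proof.
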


The canonical decomposition of a set $A$ under the conditions of Theorem \ref{thm:3k-4} described in \eqref{eq:stable} will be called the {\it stable decomposition} of $A$. It is uniquely defined by our definition of stable sets of cardinality at least two having a hole (non element) in the one before the last position.

We  observe that, for each $T\in [2k-1,3k-4]$ there is an extremal set $A$ with $k=|A|$ and $|2A|=T$. For example, for each  $b\in [1,k-3]$  the set $$\{0\}\cup [b+1,k+b-1],$$ is an extremal set with doubling $T=|2A|=2k-1+b$.

\section{A lower bound for the volume}\label{sec:lb}

We will show that
$$
vol (k,T)\ge \mu (k,T)+1,
$$
by describing, for each $k\ge 4$ and   $2k-1\le T\le {k\choose 2}+2$,  a family of normalized $1$--dimensional sets with cardinality $k$, doubling $T$ and maximum element  $\mu (k,T)$. 

We define the following operations on a normal set  $A$:
\begin{enumerate}
\item[(i)] $D(A)=A\cup \{2\max (A)\}$.
\item[(ii)] $D_x(A)=2\cdot A \cup \{x\}=\{2a: x\in A\}\cup \{x\}$, $x$ odd and $x\in 2A\setminus A$.
\end{enumerate}

\begin{lemma}\label{lem:dd} Let $A$ be a $1$--dimensional set in normal form with $T=|2A|$. If $\max (A)=\mu (k,T)$ and $x$ is an odd number in $2A\setminus A$, then both $D(A)$ and $D_x(A)$ are normal $1$--dimensional sets with
$$
\max (D(A))=\max (D_x(A))=\mu (k+1, T+k).
$$
\end{lemma}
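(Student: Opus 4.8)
The plan is to verify the three assertions — that $D(A)$ and $D_x(A)$ are in normal form, that they are $1$--dimensional, and that their maximum element equals $\mu(k+1,T+k)$ — essentially by direct computation, using the Konyagin--Lev criterion (Corollary after Theorem \ref{thm:kl}) for the dimension claims and the explicit formula \eqref{eq:mudef} for the volume claim.

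First I would handle the normalization. For $D(A)=A\cup\{2\max(A)\}$ this is immediate: $0=\min(A)\in D(A)$, the new element $2\max(A)>\max(A)$ so the minimum is unchanged, and $\gcd$ can only shrink or stay, but $\gcd(A)=1$ already forces $\gcd(D(A))=1$. For $D_x(A)=2\cdot A\cup\{x\}$ with $x$ odd, again $0=2\cdot 0\in D_x(A)$ and $x\in 2A\setminus A$ with $0=\min(A)$ forces $x>0$ (in fact $x\le 2\max(A)$), so the minimum is $0$; the gcd of $2\cdot A$ is $2$ since $\gcd(A)=1$, but adjoining the odd number $x$ brings the gcd down to $1$. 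I would also record $|D(A)|=|D_x(A)|=k+1$: for $D(A)$ because $2\max(A)\notin A$ (as $A$ is $1$-dimensional in normal form with $k\ge 4$, hence not contained in $\{0,\max(A)\}$, so $2\max(A)>\max(A)$ is genuinely new), and for $D_x(A)$ because $x$ is odd while every element of $2\cdot A$ is even.

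Next, the dimension. For $D(A)$: the relation $0+2\max(A)=\max(A)+\max(A)$ involves the new element $2\max(A)$ together with elements of $A$, so by the Corollary $A\cup\{2\max(A)\}$ is $1$-dimensional. For $D_x(A)$: since $x\in 2A$, write $x=a+a'$ with $a,a'\in A$; then $2a+2a'=x+x$ is an additive relation among elements of $D_x(A)$ tying $x$ to $2\cdot A$, and since $2\cdot A\cong_F A$ is $1$-dimensional, applying the Corollary to the $1$-dimensional normal set $2\cdot A$ (noting $x>\max(2\cdot A)$ would need checking, or else one argues directly via Theorem \ref{thm:kl} that $\lambda$ increases by exactly $1$) gives that $D_x(A)$ is $1$-dimensional. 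The small subtlety here — that $x$ need not exceed $\max(2\cdot A)=2\max(A)$ — is handled either by observing $x<2\max(A)$ forces $x$ to be a hole of $2\cdot A$ so the Corollary still applies after relabelling, or more robustly by invoking Theorem \ref{thm:kl} directly: the relation $2a+2a'=2x$ is a new independent vector in $\R^{k+1}$, so $\lambda(D_x(A))\ge \lambda(2\cdot A)+1=\lambda(A)+1=(k-1-1)+1=k-1$, whence $\dim(D_x(A))\le (k+1)-1-(k-1)=1$, and it is clearly at least $1$.

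Finally, the volume computation, which is the routine but bookkeeping-heavy step. Since $\max(A)=\mu(k,T)=2^{c-2}(k-c+b+1)$ where $c=c(k,T)$, $b=b(k,T)$, I compute $\max(D(A))=2\max(A)=2^{c-1}(k-c+b+1)$ and likewise $\max(D_x(A))=2\max(A)$ (the adjoined $x$ satisfies $x<2\max(A)$, so it does not affect the maximum — one should note that if $x=2\max(A)$ it is even, contradicting $x$ odd, so strictness holds). It remains to check that $2^{c-1}(k-c+b+1)=\mu(k+1,T+k)$. For this I must identify $c(k+1,T+k)$ and $b(k+1,T+k)$. Using the parametrization \eqref{eq:kbdef}, $T+k=(ck-\binom{c+1}{2}+b+2)+k=(c+1)(k+1)-\binom{c+2}{2}+b+2$, where one verifies the algebraic identity $(c+1)(k+1)-\binom{c+2}{2}=ck-\binom{c+1}{2}+k+1$ — wait, this needs care, so let me instead match directly: I claim $c(k+1,T+k)=c+1$ and $b(k+1,T+k)=b$, which would give $\mu(k+1,T+k)=2^{(c+1)-2}((k+1)-(c+1)+b+1)=2^{c-1}(k-c+b+1)$, exactly the desired value. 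To confirm the claim I check the boundary conditions $2\le c+1\le (k+1)-2$, i.e. $2\le c\le k-2$ (which holds, with the degenerate case $c=2,b=0$ handled separately and trivially since then $\mu(k,2k-1)=k-1$, $\mu(k+1,3k-1)=2(k-1)$, consistent), and $1\le b\le (k+1)-(c+1)-1=k-c-1$, which is exactly the hypothesis on $b$. The one genuine obstacle worth flagging is the edge case $c=k-2$ (then $c(k,T)$ is at its maximum and $T$ lies in the top interval $I_{k-2,k}$): there one must check that $T+k$ still falls in the valid range $[2(k+1)-1,\binom{k+1}{2}+2]$ and that $c(k+1,T+k)=k-1$ is still an admissible doubling constant for sets of size $k+1$, i.e. $k-1\le (k+1)-2$, which holds with equality — so the lemma remains valid right up to the boundary, which is precisely the feature exploited later in Theorem \ref{thm:chain}.
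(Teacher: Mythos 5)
Your proof is correct and follows essentially the same route as the paper's: the dimension claims are obtained via the Konyagin--Lev parameter $\lambda$ increasing by one through the new relations $0+2a=a+a$ and $2a_i+2a_j=2x$, and the volume claim via the identity $c(k+1,T+k)=c+1$, $b(k+1,T+k)=b$, which you actually verify more carefully than the paper does (including the degenerate case $b=0$). The only item the paper records that you omit is the computation $|2D(A)|=|2D_x(A)|=T+k$; it is not part of the literal statement, but it is what makes the lemma usable in the corollary that follows.
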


\begin{proof}  Let $c=c(k,T)$ and $b=b(k,T)$ so that
$$
T=ck-{c+1\choose 2}+b+2.
$$

Let $B=D(A)$ and $a=\max (A)$. We  have
$$
|2B|=|2A\cup (2a+A)\cup \{4a\}|=|2A|+k,
$$
and
$$
\max (B)=2a=2^{c-1}((k+1)-(c+1)+b+1).
$$
Since 
$$
c(k+1,T+k)=c(k,T)+1 \; \text{and } b(k+1,T+k)=b(k,T),
$$
we have $\max (B)=\mu (k+1, T+k)$. In order to show that $B$ is one--dimensional we just observe  that the new additive relation  corresponding to the $3$--term arithmetic progression $\{0,a,2a\}$ is linearly independent from the existing ones in $A$. It follows that  $\lambda (B)=\lambda (A)+1$ and therefore, according to Theorem \ref{thm:kl}, $B$ remains one--dimensional.

Let now $C=D_x(A)$. Since $x$ is the only odd number in $C$ we have
$$
|2C|=|2A|+k,
$$
and, since $x\in 2A$, we also have  $\max (C)=2\max (A)=\mu(k+1, T+k)$. We clearly have $\lambda(2\cdot A)=\lambda (A)$ (the two sets are $F$--isomorphic.)  Since $x\in 2A$ we have a relation of the form $a_i+a_j=x$ for some $i,j$. Hence $2a_i+2a_j=2x$ gives a relation in $C$ involving $x\not \in A$, which is therefore linearly independent of the existing relations in $2\cdot A$. Hence $\lambda (C)=\lambda (A)+1$ and $C$ is one dimensional. We note that the only relation involving $x$ in $C$ must be of the form $2a_i+2a_j=2x$, since $x$ is odd. Thus, if $x\not\in 2A\setminus A$ then $\dim (C)=\dim (A)+1$.
\end{proof}

By using the above operators $D$ and $D_x$ one can construct normal $1$--dimensional sets $A$ with $k=|A|, T=|2A|$ and  $\max (A)=\mu (k,T)$ for all suitable values of $T$.

\begin{corollary} For each $k\ge 4$ and each $T\in [2k-1, {k\choose 2}+2]$ there is a normal $1$--dimensional set  $A$ with $k=|A|, T=|2A|$ and  $\max (A)=\mu (k,T)$
\end{corollary}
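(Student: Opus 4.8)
The plan is to build the required sets inductively on $k$, starting from the known extremal sets in the range $2k-1\le T\le 3k-4$ (doubling constant $c=2$) provided at the end of Section \ref{sec:3k-4}, and then repeatedly applying the operators $D$ and $D_x$ of Lemma \ref{lem:dd} to climb up in both $k$ and $T$. Concretely, I would first check the base case: for $k\ge 4$ and $T\in [2k-1,3k-4]$ we have $c(k,T)=2$, $b=b(k,T)\in[1,k-3]$ (or $b=0$, $T=2k-1$), and the set $A=\{0\}\cup[b+1,k+b-1]$ is $1$-dimensional, in normal form, has $|A|=k$, $|2A|=2k-1+b=T$, and $\max(A)=k+b-1=\mu(k,T)$ since $\mu(k,T)=2^{0}(k-2+b+1)=k+b-1$. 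This settles every $T$ with doubling constant $2$.

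Next I would set up the induction step. Suppose $c=c(k,T)\ge 3$. The arithmetic of the parametrization (already used inside the proof of Lemma \ref{lem:dd}) gives $c(k-1,T-(k-1))=c(k,T)-1=c-1\ge 2$ and $b(k-1,T-(k-1))=b(k,T)=b$, and one checks the boundary conditions $1\le b\le (k-1)-(c-1)-1=k-c-1$ are inherited. So by the induction hypothesis there is a normal $1$-dimensional set $A'$ with $|A'|=k-1$, $|2A'|=T-(k-1)=:T'$, and $\max(A')=\mu(k-1,T')$. Now Lemma \ref{lem:dd} applies: choosing either $D$, or $D_x$ for any odd $x\in 2A'\setminus A'$, produces a normal $1$-dimensional set of cardinality $k$, doubling $T'+(k-1)=T$, and maximum element $\mu(k-1,T')+\mu(k-1,T')=2\mu(k-1,T')$, which equals $\mu(k,T)$ by the displayed identity in the proof of Lemma \ref{lem:dd}. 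For the step we may simply take $B=D(A')$; this always works and requires no choice of $x$, so the corollary follows immediately by induction. (One should just note $A'$ is nonempty and, being $1$-dimensional in normal form with $k-1\ge 3$, has $2A'\setminus A'\neq\emptyset$ if at some point the $D_x$ variant is wanted, but for the existence statement $D$ suffices.)

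The only genuinely delicate point is the bookkeeping of the parameters $c$ and $b$ at the boundaries of the intervals $I_{c,k}$: one must verify that every $T\in[2k-1,\binom{k}{2}+2]$ is reached, i.e. that descending from $(k,T)$ to $(k-1,T-(k-1))$ never leaves the admissible range and eventually lands in the base case $c=2$. Since $c(k,T)\ge 3$ implies $c(k-1,T-(k-1))=c-1$, the doubling constant drops by exactly one at each step and after $c-2$ steps we arrive at doubling constant $2$ with the same value of $b$, which is precisely the base case; moreover at each intermediate stage the inequality $b\le k'-c'-1$ holds because $k'-c'$ is constant along the descent (both drop by one) and equals $k-c$. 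This is a short but essential verification, and it is the main obstacle in the sense that everything else is a direct invocation of Lemma \ref{lem:dd}.

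\begin{proof}[Proof of the Corollary] We argue by induction on $c=c(k,T)$. If $c=2$, then $T=2k-1+b$ with $0\le b\le k-3$, and the set $A=\{0\}\cup[b+1,k+b-1]$ (interpreted as $[1,k-1]$ when $b=0$) is in normal form, $1$-dimensional, has $|A|=k$, $|2A|=2k-1+b=T$, and $\max(A)=k+b-1=2^{c-2}(k-c+b+1)=\mu(k,T)$.

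Assume $c\ge 3$ and the statement holds for all smaller values of the doubling constant. Put $k'=k-1$ and $T'=T-(k-1)$. As computed in the proof of Lemma \ref{lem:dd}, $c(k',T')=c-1\ge 2$ and $b(k',T')=b(k,T)=:b$, and since $k'-c(k',T')=k-c$ the boundary condition $1\le b\le k'-c(k',T')-1=k-c-1$ holds (and $T'\ge 2k'-1$). By the induction hypothesis there is a normal $1$-dimensional set $A'$ with $|A'|=k'$, $|2A'|=T'$ and $\max(A')=\mu(k',T')$. Applying Lemma \ref{lem:dd} to $A'$ with the operator $D$, the set $A=D(A')$ is normal and $1$-dimensional with $|A|=k'+1=k$, $|2A|=T'+k'=T$, and $\max(A)=\mu(k'+1,T'+k')=\mu(k,T)$.
\end{proof}
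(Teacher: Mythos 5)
Your proof is correct and follows essentially the same route as the paper: explicit extremal sets for the base range $T\in[2k-1,3k-4]$ coming from the $(3k-4)$--Theorem, followed by induction using the operator $D$ and Lemma \ref{lem:dd}. Your parameter bookkeeping is in fact slightly more careful than the paper's, which writes $\mu(k-1,T-k)$ where $\mu(k-1,T-(k-1))$ is what Lemma \ref{lem:dd} actually requires.
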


\begin{proof} By Theorem \ref{thm:3k-4} the statement holds for   every $k\ge 3$ and every $T\in [2k-1,3k-4]$.  By induction on $k\ge 4$ and $T$, there is a normal $1$--dimensional set $A'$ with cardinality $k-1$ with $\max (A')=\mu (k-1, T-k)$. By Lemma \ref{lem:dd}, $A=D(A')$ is a normal $1$--dimensional set with $|A|=k$ and $|2A|=T$ and   $\max (A)=\mu (k,T)$.
\end{proof}

In particular we have shown that the value for the maximum volume in Conjecture \ref{conj:main} is tight. 

\begin{corollary}\label{cor:ub}  Let $k\ge 4$  and $T\in [2k-1, {k\choose 2}+2]$. Then
$$
vol (k,T)\ge \mu (k,T)+1. 
$$
Equivalently,  every extremal set $A$ with cardinality $k$ and  maximum element $\max (A)\le \mu (k,T)$ satisfies
$$
|2A|\le T.
$$

\end{corollary}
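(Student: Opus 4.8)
The plan is to read off Corollary~\ref{cor:ub} directly from the preceding Corollary by a short contradiction argument, so the real content has already been done. First I would observe that the first displayed inequality $vol(k,T)\ge \mu(k,T)+1$ is literally the conclusion of the unnamed Corollary just proved: that Corollary produces a normal $1$--dimensional set $A$ with $|A|=k$, $|2A|=T$ and $\max(A)=\mu(k,T)$, and since $A$ is $1$--dimensional and in normal form we have $vol(A)=\max(A)+1=\mu(k,T)+1$ by the formula for the volume of a $1$--dimensional normalized set recalled in the introduction; hence $vol(k,T)$, being the maximum of $vol$ over all such sets, is at least $\mu(k,T)+1$.

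Next I would establish the equivalent reformulation. Suppose $A$ is extremal with $|A|=k$ and $\max(A)\le \mu(k,T)$, and suppose for contradiction that $|2A|=T'>T$. Write $k'=k$ and let $c'=c(k,T')$, $b'=b(k,T')$. Since $A$ is extremal, $vol(A)=vol(k,T')=\mu(k,T')+1$ by definition of $vol(k,\cdot)$ and of $\mu$. Now $vol(A)\le \max(A)+1$ always holds for a normalized set (the convex hull of any $F$--isomorphic copy in $\Z^1$ has cardinality at least that of a realizing copy, and for the identity embedding it equals $\max(A)+1$; more precisely $vol(A)\le \ell(A)=\max(A)+1$ when $A$ is in normal form viewed as $1$--dimensional, and in general $vol(A)\le vol(k,T')$). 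Combining, $\mu(k,T')+1=vol(A)\le \max(A)+1\le \mu(k,T)+1$, so $\mu(k,T')\le \mu(k,T)$.

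The final step is to check that $T\mapsto \mu(k,T)$ is strictly increasing on $[2k-1,\binom{k}{2}+2]$, which then contradicts $T'>T$. This is a direct computation with the formula $\mu(k,T)=2^{c-2}(k-c+b+1)$ together with the parametrization $T=ck-\binom{c+1}{2}+b+2$: within a fixed interval $I_{c,k}$ increasing $b$ by one increases $\mu$ by $2^{c-2}\ge 1$, and crossing from the top of $I_{c,k}$ (where $b=k-c-1$, giving $\mu=2^{c-2}\cdot 2(k-c)=2^{c-1}(k-c)$) to the bottom of $I_{c+1,k}$ (where $b=1$, giving $\mu=2^{c-1}(k-c-2+1+1)=2^{c-1}(k-c)$) the value is at least non-decreasing, and in fact the next increment is strict; a short check of the boundary cases $c=2$, $b=0$ (the value $T=2k-1$) confirms strict monotonicity throughout. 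The only mildly delicate point, and the one I would write out carefully, is this monotonicity across the interval boundaries and the degenerate endpoint $b=0$, $c=2$; everything else is bookkeeping. Hence $\mu(k,T')>\mu(k,T)$ whenever $T'>T$, contradicting $\mu(k,T')\le\mu(k,T)$, so $|2A|\le T$, which is the claimed equivalent form.
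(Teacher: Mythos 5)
Your proof is correct and is essentially the paper's (the paper states this corollary without proof as an immediate consequence of the preceding construction of a normal $1$--dimensional set with $\max(A)=\mu(k,T)$, plus the monotonicity of $\mu(k,\cdot)$ that you verify). Two harmless slips: you only need and only have $vol(k,T')\ge\mu(k,T')+1$, not equality, and the bottom of $I_{c+1,k}$ gives $\mu=2^{c-1}(k-c+1)$ rather than $2^{c-1}(k-c)$ — which in fact makes the strict monotonicity across interval boundaries immediate.
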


The  main result in the paper essentially states that chains are obtained by iterate application of operators of the form $D$ and $D_x$ to some extremal set with doubling constant $c=2$. This the meaning of the family $\Phi$ in the statement of Theorem \ref{thm:chain}. The family $\Phi$ is defined as follows. Let $X$ be a finite set of integers and let $\tilde{X}$ be its normal form. Then, $\phi \in \Phi$ if it acts in one of the following four ways: 
\begin{enumerate}
	\item[(i)] $\phi (X)=\tilde{X}\cup 2\max(\tilde{X})$, or
	\item[(ii)] $\phi (X)=(\tilde{X})^-\cup 2\max(\tilde{X})$, or
	\item[(iii)] $\phi (X)=2\cdot \tilde{X}\cup \{x\}$ for some odd $x\in 2\tilde{X}\setminus \tilde{X}$, or
	\item[(iv)] $\phi (X)=2\cdot (\tilde{X})^-\cup \{x\}$ for some odd $x\in 2(\tilde{X})^-\setminus (\tilde{X})^-$.
	\end{enumerate}


\section{Chains}\label{sec:chains}

Theorem \ref{thm:chain} gives a structural characterization of a general class ${\mathcal C}$ of sets that we call chains. We denote by $[A]=[\min(A), \max (A)]$ the convex hull of a set of integers.

A set $A$ is a {\it chain} if  there is a sequence
$$
A_3\subset A_4\subset \cdots \subset A_k=A
$$
such that 
\begin{enumerate}
\item[(i)] $A_3\cong_F \{0,1,2\}$,
\item[(ii)] each $A_i$, $4\le  i \le k$, is a $1$--dimensional set with $i$ elements with doubling $|2A_i|\le {i\choose 2}+1$, and $A_i\cap [A_{i-1}]= A_{i-1}$,
\item[(iii)] $A_i$ has largest volume among all sets $B$ with the same cardinality and doubling as $A_i$ and such that $B\cap [A_{i-1}]= A_{i-1}$.
\end{enumerate}

For example, up to isomorphism, the class of chains in normal form with  $5$ elements consist of the following sets:

\begin{center}
\begin{tikzpicture}[scale=0.6] 

\node at  (4,8) {$A$};
\foreach \i in {0,...,8}
{
\foreach \j in {1,...,7}
{
\draw (\i,\j) circle(3pt);
}
}
\foreach \i in {0,1,2,3,4}
{
\draw[fill] (\i, 7) circle(3pt);
}
\foreach \i in {0,2,3,4,5}
{
\draw[fill] (\i, 6) circle(3pt);
}
\foreach \i in {0,2,4,5,6}
{
\draw[fill] (\i, 5) circle(3pt);
}
\foreach \i in {0,3,4,5,6}
{
\draw[fill] (\i, 4) circle(3pt);
}
\foreach \i in {0,2,3,4,6}
{
\draw[fill] (\i, 3) circle(3pt);
}
\foreach \i in {0,4,5,6,8}
{
\draw[fill] (\i, 2) circle(3pt);
}
\foreach \i in {0,4,6,7,8}
{
\draw[fill] (\i, 1) circle(3pt);
}
\foreach \i in {0,...,8}
{
\node at (\i,0) {\i};
}

\node at (10,8) {$|2A|$};
\node at (10,7) {$9$};
\node at (10,6) {$10$};
\node at (10,5) {$11$};
\node at (10,4) {$11$};
\node at (10,3) {$11$};
\node at (10,2) {$12$};
\node at (10,1) {$12$};

\end{tikzpicture}
\end{center}

For every chain $A$, the deletion of either its larger element or   its smaller element is also a chain.  The set $\{0,3,4,6,7,8\}$, according to Theorem \ref{thm:3k-4}, is an extremal set (it has doubling $14$ and largest element $8$) but it is not a chain: by removing the last element we obtain a two--dimensional set, while by removing the first element we obtain a set which does not satisfy condition (iii) above.

We denote by  $vol_{\mathcal C} (k,T)$ the maximum volume of a chain with cardinality $k$ and doubling $T$. We say that a chain $A$ is {\it extremal} if $vol (A)=vol_{\mathcal C} (|A|,|2A|)$. 

Arithmetic progressions are chains. The examples of extremal sets given after Theorem \ref{thm:3k-4} are chains (by removing its smaller element we obtain an arithmetic progression.) We  note that, if $A$ is a chain, then $D(A)$ is also a chain. It follows from the above remarks and Corollary \ref{cor:ub} that,  for each $k$ and each $T\in [2k,{k\choose 2}+1]$, we have
$$
vol_{\mathcal C} (k,T)\ge \mu (k,T)+1.
$$
Theorem \ref{thm:chain} states that equality holds. The remainder of this Section contains a sequence of Lemmas which provide the proof of Theorem \ref{thm:chain}. Some of the statements apply to the larger class of extremal one--dimensional sets. We say that a set $A$ is $1$--{\it extremal} if it has largest volume among all one--dimensional sets with its same cardinality and doubling. 

We collect some observations in the following Lemma for future reference.
  
  \begin{lemma}\label{lem:cx} Let $A$ and $A_x=A\cup \{ x\}$,  $x>\max (A)$, be two one--dimensional sets. Set $k=|A|, T=|2A|, T_x=|2A_x|$ and $\Delta T=T_x-T$. The following holds:
 \begin{equation}\label{eq:deltaT0}
 \Delta T=k+1-|2A\cap (A+x)|,
 \end{equation}
 and
  \begin{equation}\label{eq:deltaT1}
2\le  \Delta T\le k,
 \end{equation}
 and
 \begin{equation}\label{eq:cx0}
 c(k,T)-1\le c(k+1,T_x)\le c(k,T)+1.
 \end{equation}
 \end{lemma}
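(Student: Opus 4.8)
The statement collects three elementary facts about adjoining one new element $x>\max(A)$ to a one--dimensional set $A$. The plan is to prove them in the order \eqref{eq:deltaT0}, \eqref{eq:deltaT1}, \eqref{eq:cx0}, since each relies on the previous.

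First I would establish \eqref{eq:deltaT0} by writing $2A_x$ explicitly. Since $A_x=A\cup\{x\}$ with $x>\max(A)$, we have $2A_x = 2A \cup (A+x) \cup \{2x\}$. The element $2x$ is strictly larger than every element of $2A$ and of $A+x$ (because $x>\max(A)$ forces $2x > a+x$ for all $a\in A$, and $2x > a+a'$ for all $a,a'\in A$), so $2x$ contributes exactly one new element. For the set $A+x$, it has exactly $k$ elements (a translate of $A$), and it overlaps $2A$ in $|2A\cap(A+x)|$ elements; none of its elements equals $2x$. Hence $|2A_x| = |2A| + (k - |2A\cap(A+x)|) + 1$, which is precisely \eqref{eq:deltaT0}.

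Next, \eqref{eq:deltaT1}: the upper bound $\Delta T \le k$ is immediate from \eqref{eq:deltaT0} since $|2A\cap(A+x)|\ge 1$ — indeed $A_x$ being one--dimensional means (by the Corollary to Theorem~\ref{thm:kl}) that $x\in 2A-A$, i.e. there exist $a,a',a''\in A$ with $a+x=a'+a''$, so $a'+a'' \in 2A\cap(A+x)$ and the intersection is nonempty. The lower bound $\Delta T\ge 2$ requires showing $|2A\cap(A+x)|\le k-1$: the largest element of $A+x$ is $\max(A)+x$, which is strictly larger than $\max(2A)=2\max(A)$ precisely when $x>\max(A)$, so $\max(A)+x\in (A+x)\setminus 2A$, giving at least one element of $A+x$ outside $2A$. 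Thus $|2A\cap(A+x)|\le k-1$ and $\Delta T\ge 2$.

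Finally, for \eqref{eq:cx0} I would argue via the monotone piecewise structure of $c(\cdot,\cdot)$. Passing from $(k,T)$ to $(k+1,T_x)$ with $T_x=T+\Delta T$ and $2\le \Delta T\le k$, one compares with the two "boundary" transitions. The operator $D$ (adjoining $2\max(A)$) realizes $\Delta T = k+1 - |2A\cap(A+2\max(A))| = k+1-k = 1$... \emph{wait} — here $\Delta T$ ranges in $[2,k]$ whereas the construction in Lemma~\ref{lem:dd} increases doubling by exactly $k$; the point is that $c(k+1,T+k)=c(k,T)+1$ (shown in the proof of Lemma~\ref{lem:dd}), and for the minimum relevant increase one checks that $c(k+1,T+2)\ge c(k,T)-1$. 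Concretely, using the parametrization $T=ck-\binom{c+1}{2}+b+2$ with $b\in[1,k-c-1]$, I would substitute $k\mapsto k+1$ and solve for which $c'$ satisfies $T+\Delta T = c'(k+1)-\binom{c'+1}{2}+b'+2$ with $b'\in[1,k-c']$; since the interval $I_{c',k+1}$ has length $k-c'$ and the quantity $c'(k+1)-\binom{c'+1}{2}$ increases by roughly $k+1-c'$ as $c'$ increases by $1$, an increase of $T$ by at most $k$ can push $c$ up by at most one, and the shift $k\to k+1$ (which increases $I_{c,k}$'s left endpoint by $c$) together with $\Delta T\ge 2$ prevents $c$ from dropping by more than one. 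The main obstacle is this last step: the bookkeeping at the interval boundaries (including the exceptional case $b=0$, $c=2$ at $T=2k-1$) needs care, and one must verify the inequalities $I_{c-1,k+1}$ through $I_{c+1,k+1}$ actually cover the range $[T+2, T+k]$ for every admissible $(c,b)$. I expect this to reduce to two or three short inequalities in $c,b,k$ that hold throughout the admissible region.
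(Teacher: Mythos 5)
Parts \eqref{eq:deltaT0} and \eqref{eq:deltaT1} of your argument are complete and coincide with the paper's proof: the decomposition $2A_x=2A\cup(A+x)\cup\{2x\}$, the lower bound $|2A\cap(A+x)|\ge 1$ coming from the one--dimensionality of $A_x$ (via the corollary to Konyagin--Lev), and the upper bound $|2A\cap(A+x)|\le k-1$ coming from $\max(A)+x>\max(2A)$ are exactly the steps the paper takes, and your justifications for them are sound.

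For \eqref{eq:cx0} you have the right strategy --- show that $I_{c-1,k+1}\cup I_{c,k+1}\cup I_{c+1,k+1}$ covers $T+[2,k]$ --- and this is precisely what the paper does, but you stop at a plan, and the deferred computation is not quite as harmless as you expect. Carrying it out: the three intervals are contiguous, so their union is the single interval $\bigl[(c-1)(k+1)-\binom{c}{2}+3,\ (c+1)(k+1)-\binom{c+2}{2}+k-c+1\bigr]$; its right endpoint equals $\max I_{c,k}+k$ exactly, so the upper bound $c(k+1,T_x)\le c+1$ always holds; comparing its left endpoint with $\min I_{c,k}+2$ yields the condition $k\ge 2c-3$. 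For $c=2$ one checks directly that $T+[2,k]\subseteq I_{2,k+1}\cup I_{3,k+1}=[2k+1,4k-4]$. The caveat is that $k\ge 2c-3$ does \emph{not} hold throughout the admissible region $2\le c\le k-2$: it fails for $c>(k+3)/2$, in which case $T+2$ can lie below $\min I_{c-1,k+1}$ and the lower bound in \eqref{eq:cx0} is not secured by this interval count alone. So your expectation that the reduction yields inequalities valid for all admissible $(c,b)$ is incorrect as stated; to finish you must either restrict to $c\le(k+3)/2$ (which is the regime in which the paper actually uses the lemma, cf.\ its remark that chains have doubling constant at most $(|A|-1)/2$) or supply an additional argument ruling out small $\Delta T$ when $c$ is large. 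The paper's own one--line verification asserts the coverage without addressing this regime, so you are in the same position as the authors here --- but a complete proof needs this point settled, and your proposal as written does not settle it.
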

 
 \begin{proof} We have
 $$
 T_x=|2A\cup (x+A)\cup \{2x\}|= |2A|+|A|-|2A\cap (x+A)|+1.
 $$
 Since $A_x$ is one--dimensional, $x$ must be involved in an additive relation with the elements in $A$, which implies  $|2A\cap (x+A)|\ge 1$. On the other hand, $x>\max (A)$ implies $|2A\cap (x+A)|\le |A|-1$. This gives  \eqref{eq:deltaT0} and \eqref{eq:deltaT1}.
 
 For the last part, write $c=c(k,T)$ and just note that, for $c=2$, $T\in [2k-1,3k-4]$ and $T+[2,k]\subset [2(k+1)-1,4(k+1)-8]=I_{2,k+1}\cup I_{3,k+1}$, while, for $c\ge 3$, 
 $$
 T\in I_{c,k}=\left[ ck-{c+1\choose 2}+3, ck-{c+1\choose 2}+(k-c)+1\right],
 $$ 
 and the interval  
 $$
I_{c-1,k+1}\cup  I_{c,k+1}\cup I_{c+1,k+1}=\left[(c-1)(k+1)-{c\choose 2}+3, (c+1)(k+1)-{c+2\choose 2}+(k-c)+1\right],
 $$
 contains the values of $T+\Delta T$ for each $\Delta T\in [2,k]$.
 \end{proof}

Our next Lemma gives  a lower bound on the values of integers $x$ which can be added to a set $A$ which admits a stable decomposition,  if one expects to obtain an extremal set. 

\begin{lemma}\label{lem:int0} Let $A=A_1\circ P \circ A_2$ be a  set with $k=|A|, T=|2A|$ and $a=\max (A)$, 
where $A_1$ is an stable set and $A_2$ is  a right stable set and $P$ is a segment. 

Let $A_x=A\cup \{x\}$ with $a<x\le 2a$ and $T_x=|2A_x|$. 

Assume that $T_x>3(k+1)-4$ and that $A_x$ is $1$--dimensional.

If $a= \mu (k,T)$ and $x\ge \mu (k+1, T_x)$ then $$x\ge 2a-(a_1+a_2-2),$$
where $a_1=\max(A_1)$ and $a_2=\max(A_2)$.
\end{lemma}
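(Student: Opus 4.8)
The plan is to argue by contradiction. Since $\min(A)=0$ and $\ell(A)=\ell(A_1)+\ell(P)+\ell(A_2)-2$, one has $a=a_1+a_2+|P|-1$, hence $2a-(a_1+a_2-2)=a+|P|+1$ and the conclusion is equivalent to $x\ge a+|P|+1$. Assume, for a contradiction, that $a<x\le a+|P|$, and put $\delta=x-a\in[1,|P|]$.

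First I would pin down the shape of $2A$. Since $A$ admits the stable decomposition $A=A_1\circ P\circ A_2$ (the situation of Theorem \ref{thm:3k-4}) we have $c(k,T)=2$, so $a=\mu(k,T)=T-k$ by \eqref{eq:mudef}--\eqref{eq:kbdef}; moreover Theorem \ref{thm:3k-4} gives $2A=A_1\circ P'\circ A_2$, and a length count identifies $P'$ with the full segment $[a_1,2a-a_2]$. Thus
$$
A=A_1\cup[a_1,a-a_2]\cup\big((a-a_2)+A_2\big),\qquad
2A=A_1\cup[a_1,2a-a_2]\cup\big((2a-a_2)+A_2\big),
$$
and correspondingly $A+x=(A_1+x)\cup[a_1+x,\,a-a_2+x]\cup\big((a-a_2+x)+A_2\big)$.

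The core of the proof is an exact evaluation of $\Delta T=T_x-T=k+1-|2A\cap(A+x)|$ (formula \eqref{eq:deltaT0}). Since $a_1<x\le a+|P|$, both $A_1+x$ and the whole segment $[a_1+x,a-a_2+x]$ lie in $[x,2a-a_2+1]$ while the upper ends of that segment and of $(a-a_2+x)+A_2$ lie above $2a-a_2$; intersecting the three pieces of $A+x$ with the three pieces of $2A$ and carefully discounting the points shared by adjacent pieces, one obtains
$$
|2A\cap(A+x)|=|A_1|+\big(|P|-\delta\big)+\big|A_2\cap[1,\delta]\big|+\big|(A_2\setminus\{0\})\cap(A_2-\delta)\big|-1 ,
$$
hence, using $k=|A_1|+|P|+|A_2|-2$,
$$
\Delta T=\delta+\Big(|A_2|-\big|A_2\cap[1,\delta]\big|-\big|(A_2\setminus\{0\})\cap(A_2-\delta)\big|\Big).
$$
As the translation $s\mapsto s+\delta$ injects $\{\,s\ge1:s,\ s+\delta\in A_2\,\}$ into $A_2\cap(\delta,a_2]$, the parenthesis is $\ge1$; set $f=\Delta T-\delta\ge1$, so that $T_x=T+\delta+f$ and $x=a+\delta=(T-k)+\delta=T_x-k-f$.

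Finally I would close the estimate arithmetically. By hypothesis $T_x>3(k+1)-4=3k-1$, so $c(k+1,T_x)\ge3$, and since $c(k,T)=2$ the inequality \eqref{eq:cx0} forces $c(k+1,T_x)=3$; then $\mu(k+1,T_x)=2T_x-4k$ by \eqref{eq:mudef}--\eqref{eq:kbdef}. Therefore
$$
\mu(k+1,T_x)-x=(2T_x-4k)-(T_x-k-f)=T_x-(3k-f)>0 ,
$$
the strict inequality holding because $T_x>3k-1\ge3k-f$ (this is where $f\ge1$ is used). This contradicts $x\ge\mu(k+1,T_x)$, and the lemma follows. The step I expect to be the real work is the exact computation of $\Delta T$: one must track the boundary points shared by consecutive blocks of $A$, $A+x$ and $2A$, and separately dispose of the degenerate configurations ($A_1=\{0\}$, $A_2=\{0\}$, $|P|=1$, and $\delta=|P|$); and one must retain the bound $\Delta T\ge\delta+1$ rather than the weaker $\Delta T\ge\delta$, since it is this extra unit, together with the strictness of the hypothesis $T_x>3(k+1)-4$, that makes the final inequality strict.
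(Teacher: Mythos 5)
Your opening reduction contains the decisive gap: from the existence of a decomposition $A=A_1\circ P\circ A_2$ you infer that $c(k,T)=2$, and everything downstream (the identification $a=T-k$, the identification of $P'$ with the full segment $[a_1,2a-a_2]$ via Theorem \ref{thm:3k-4}, and the final arithmetic with $c(k+1,T_x)=3$ and $\mu(k+1,T_x)=2T_x-4k$) depends on it. The inference is not valid: Theorem \ref{thm:3k-4} says that extremal sets with $|2A|\le 3k-4$ admit such a decomposition, not conversely, and the lemma is stated --- and used --- without any upper bound on $T$. Indeed, in the proof of Lemma \ref{lem:2prog2odd} this lemma is applied to the set $A_x=A_1\circ P\circ A_2'$, whose doubling already exceeds $3(k+1)-4$, so the relevant doubling constant there is $3$, not $2$. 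The paper's proof therefore cannot and does not fix $c$: after establishing $\Delta T\ge x-a+1\ge \Delta\mu+1$, it runs a case analysis over the three possibilities $c(k+1,T_x)\in\{c-1,c,c+1\}$ permitted by \eqref{eq:cx0}, deriving a contradiction in each. Your final computation is essentially the sub-case $c=2$, $c_x=3$ of that analysis; the cases with $c\ge 3$, where $\mu$ carries the factor $2^{c-2}$, are missing entirely.

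The first half of your argument is on the right track but over-engineered and not actually carried out. You do not need an exact formula for $|2A\cap(A+x)|$ --- you yourself defer it as ``the real work'' and flag several degenerate configurations --- because the only output you extract from it is $\Delta T\ge\delta+1$, i.e.\ $f\ge 1$. The paper obtains exactly this by a one-line count: if $x<2a-(a_1+a_2-2)$, the holes of $x+A_1$ (which sit in $[x+1,x+a_1-1]$ since $A_1$ is stable) and the holes of $2A$ contributed by $A_2$ (which sit in $[2a-a_2+1,2a-1]$) are disjoint and all lie in $[x,2a]$, so $|2A\cap(x+A)|\le(2a-x+1)-(a+1-|A|)=a-x+|A|$, whence $\Delta T\ge x-a+1$ by \eqref{eq:deltaT0}. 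I would replace your exact evaluation by this inequality and then redo the endgame for all three admissible values of $c(k+1,T_x)$ relative to $c(k,T)$, rather than only for $c=2$.
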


\begin{proof} Since $A_1$ is stable and $A_2$ is right stable, we have
$$
2A=A_1\circ Y\circ A_2,
$$
for some set $Y$. From \eqref{eq:deltaT0},
\begin{equation}\label{eq:tx0}
\Delta T=T_x-T=|A|+1-|2A\cap (x+A)|.
\end{equation}
The intersection $2A\cap (x+A)$ is contained in the interval $[x,2a]$. If $x<2a-(a_1+a_2-2)$ then all holes of $A$ are also holes in $2A\cap (x+A)$ (see Figure \ref{fig:int0} for an illustration). It follows that
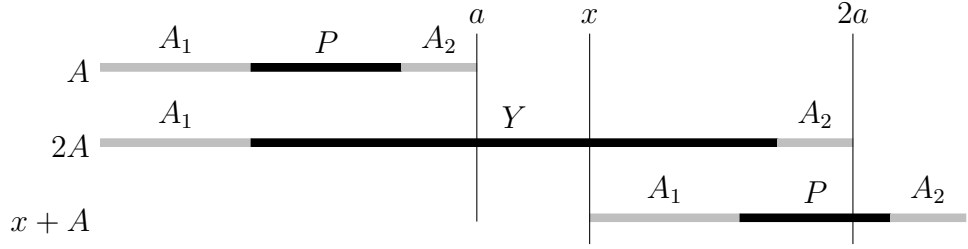
\begin{figure}
		\hspace{-3.5cm}	\begin{tikzpicture}
		\draw[lightgray,fill] (0,2) rectangle (2,2.1);
		\draw[fill] (2,2) rectangle (4,2.1);
		\draw[lightgray,fill] (4,2) rectangle (5,2.1);
		\draw[lightgray,fill] (0,1) rectangle (2,1.1);
		\draw[fill] (2,1) rectangle (9,1.1);
		\draw[lightgray,fill] (9,1) rectangle (10,1.1);
		\node[left] at (0,2) {$A$};
		\node[left] at (0,1) {$2A$};
		\node[left] at (0,0) {$x+A$};
		\node[above] at (1,2.1) {$A_1$};
		\node[above] at (3,2.1) {$P$};
		\node[above] at (4.5,2.1) {$A_2$};
		\node[above] at (1,1.1) {$A_1$};
		\node[above] at (5.5,1.1) {$Y$};
		\node[above] at (9.5,1.1) {$A_2$};
		\draw (5,2.5)--(5,0);
		\node[above] at (5,2.5) {$a$};
		\draw (10,2.5)--(10,-0.3);
		\node[above] at (10,2.5) {$2a$};
		\draw[lightgray,fill] (6.5,0) rectangle (8.5,0.1);
		\draw[fill] (8.5,0) rectangle (10.5,0.1);
		\draw[lightgray,fill] (10.5,0) rectangle (11.5,0.1);
		\draw (6.5,2.5)--(6.5,-0.3);
		\node[above] at (6.5,2.5) {$x$};
		\node[above] at (7.5,0.1) {$A_1$};
		\node[above] at (9.5,0.1) {$P$};
		\node[above] at (11,0.1) {$A_2$};
		
		\end{tikzpicture}		
		\caption{Illustration of the evaluation of $|2A\cap (x+A)|$.}\label{fig:int0}
	\end{figure}
\begin{equation}\label{eq:int0}
|2A\cap (x+A)|\le 2a-x+1-(a-|A|+1)=a-x+|A|.
\end{equation}
Let  $\Delta\mu=\mu (k+1,T_x)-\mu (k,T)$. By combining \eqref{eq:tx0} and \eqref{eq:int0} and using the hypothesis on $x$ and $a$  we obtain
\begin{equation}\label{eq:tmu}
\Delta T\ge x-a+1\ge \Delta \mu+1.
\end{equation}
Let $c=c(k,T), b=b(k,T)$ and $c_x=c(k+1,T_x), b_x=b(k+1,T_x)$. It follows from \eqref{eq:cx0} that  $c-1\le c_x\le c+1$. 

 If  $c_x=c$ then we have, by using \eqref{eq:mudef} and \eqref{eq:kbdef},  
 $$
 \Delta T=c+b_x-b\; \text{ and } \; \Delta \mu = 2^{c-2}(b_x-b+1).
 $$ 
 Since $T_x>3(k+1)-4$ we have $c_x\ge 3$. Moreover,  $c_x=c$ implies $b_x\ge b$. Since $c\le 2^{c-2}+1$ for all $c\ge 3$ it follows that \eqref{eq:tmu} does not hold, a contradiction.
 
 Suppose now  $c_x=c+1$. In this case,   
 $$\Delta T=k+b_x-b\; \text{ and } \; \Delta \mu= 2^{c-2}(k-c+2b_x-b+1).$$
 If $c=2$ then $\Delta \mu+1=\Delta T+b_x>\Delta T$ and \eqref{eq:tmu} does not hold. Suppose $c\ge 3$. 
 Since $b\le k-c-1$, we have $k-c+2b_x-b+1\ge 2b_x+2\ge 4$. Hence, 
 $$
 \Delta \mu +1\ge (2^{c-2}-1)4+\Delta T-c+2\ge \Delta T+2^{c}-c-2>\Delta T,
 $$
 which again contradicts \eqref{eq:tmu}.  
 
 Finally, if $c_x=c-1$ then  $c\ge 3$ and 
 $$
 \Delta T=2c+b_x-b-(k+1)\; \text{ and}\;\;  \Delta \mu=2^{c-3}(c+b_x-2b-(k+1)).
 $$
Hence we can write  $\Delta \mu=2^{c-3}(\Delta T-c-b)$. It follows from the proof of Lemma \ref{lem:cx} that $c_x=c-1$ implies $\Delta T<c$ so that we get $\Delta \mu<0$, a contradiction.
 This completes the proof.
 \end{proof}

\subsection{Crossing  $3k-4$}	
The next Lemma gives tight conditions on an  extremal set with doubling smaller than $3k-4$ to be extendable to a $1$--extremal set with doubling larger than $3(k+1)-4$.

\begin{lemma}\label{lem:chain} Let $A$ be a  extremal set with  $k=|A|$, $a=\max (A)$ and
$$
T=|2A|=2k-1+b,\; 1\le b\le k-3.
$$
Let $A_1\circ P\circ A_2$ be the stable decomposition of $A$ with $a_1=\max (A_1)$ and $a_2=\max (A_2)$.

If   $A_x=A\cup \{x\}$, $a< x\le 2a$,  is $1$--extremal with $T_x=|2A_x|\ge 3(k+1)-3$  then 
\begin{equation}\label{eq:xismu}
x=\mu (k+1, T_x).
\end{equation}
Moreover, 
\begin{equation}\label{eq:a1a2dense}
|A\cap (x-a+A)|=\left\lceil \frac{2a-x+1}{2}\right\rceil.
\end{equation}
\end{lemma}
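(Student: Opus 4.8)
The plan is to compare the actual value of $\Delta T = T_x - T$ with the increment of the conjectured bound $\mu$ and use the extremality of $A_x$ to squeeze $x$ into a single value. First I would set up the counting: since $A$ is extremal with $|2A|=2k-1+b$, Theorem \ref{thm:3k-4} gives the stable decomposition $A = A_1\circ P\circ A_2$ with $\ell(A)=k+b$, hence $a = \max(A) = k+b-1 = \mu(k,T)$ (here $c(k,T)=2$, so $\mu(k,T)=k-c+b+1=k+b-1$). Thus the hypothesis $a=\mu(k,T)$ of Lemma \ref{lem:int0} holds automatically. Now, because $A_x$ is $1$-extremal, Corollary \ref{cor:ub} forces $x = \max(A_x) \ge \mu(k+1,T_x)$: otherwise $A_x$ would be a one-dimensional set with $\max(A_x)\le \mu(k+1,T_x)$ and doubling strictly greater than the value $T$ corresponding to that $\mu$, contradicting the reformulation of Corollary \ref{cor:ub}. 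Wait — more carefully, $1$-extremality means $\mathrm{vol}(A_x)=\mathrm{vol}(k+1,T_x)\ge \mu(k+1,T_x)+1$, i.e. $x+1=\max(A_x)+1\ge \mu(k+1,T_x)+1$, giving $x\ge\mu(k+1,T_x)$. This is the inequality that feeds Lemma \ref{lem:int0}.

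Next I would invoke Lemma \ref{lem:int0}: its hypotheses ($T_x>3(k+1)-4$, $A_x$ one-dimensional, $a=\mu(k,T)$, $x\ge\mu(k+1,T_x)$) are all in force, so we conclude $x\ge 2a-(a_1+a_2-2)$. In this regime the picture of Figure \ref{fig:int0} is the \emph{tight} one: the intersection $2A\cap(x+A)$ now reaches into the segment part $Y$ of $2A=A_1\circ Y\circ A_2$, and I would recompute $|2A\cap(x+A)|$ exactly. The key point is that once $x\ge 2a-(a_1+a_2-2)$, the overlap with $A_1$ (on the left end of $2A$) and with $A_2$ (on the right end of $x+A$) is controlled: the only holes contributed are those of $A_1$ near position $x$ and those of $A_2$ near position $2a$. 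Using Lemma \ref{lem:hole} (for $A_1$) and its reflected version (for $A_2$) together with the denseness bound, I would show that $|2A\cap(x+A)|$ is determined, up to the parity correction $\lceil(2a-x+1)/2\rceil$, by how far below $2a$ the point $x$ sits. Plugging into \eqref{eq:deltaT0} yields $\Delta T$ as an explicit function of $2a-x$, and then the inequality $\Delta T\ge \Delta\mu+1$ derived exactly as in \eqref{eq:tmu}–\eqref{eq:tmu} forces equality throughout, which pins $x$ to the single value $\mu(k+1,T_x)$ and simultaneously forces $A_1$ and $A_2$ to be dense stable sets in the relevant initial/terminal segments. The denseness is exactly what gives \eqref{eq:a1a2dense}: $|A\cap(x-a+A)|=\lceil(2a-x+1)/2\rceil$ because $x-a+A$ shifts $A$ so that its overlap with $A$ lies in the dense-stable part.

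The main obstacle will be the exact bookkeeping of $|2A\cap(x+A)|$ in the tight regime — keeping careful track of which holes of $A_1$ (counted from the right, near $x$) and which holes of $A_2$ (counted from the left, near $2a$) survive as holes of the intersection, and matching this count against the arithmetic of $\mu(k+1,T_x)$ for each of the three cases $c_x\in\{c-1,c,c+1\}$ permitted by \eqref{eq:cx0} (though $c_x=c-1$ is already excluded, and $c=c(k,T)=2$ here simplifies matters: $c_x\in\{2,3\}$, and $T_x\ge 3(k+1)-3$ rules out $c_x=2$, so in fact $c_x=3$). With $c_x=3$ fixed, $\mu(k+1,T_x)=2((k+1)-3+b_x+1)=2(k+b_x-1)$, and I expect the equality in \eqref{eq:tmu} to collapse to a clean linear identity determining $b_x$ from $2a-x$, after which \eqref{eq:xismu} and \eqref{eq:a1a2dense} drop out. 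The one subtlety to watch is the parity of $x$ relative to $2a$, which is precisely why the ceiling appears in \eqref{eq:a1a2dense}; I would handle even and odd $2a-x$ as two parallel subcases in the final computation.
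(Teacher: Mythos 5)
Your proposal matches the paper's proof in all essentials: both derive $x\ge\mu(k+1,T_x)$ from $1$--extremality via Corollary \ref{cor:ub}, invoke Lemma \ref{lem:int0} to force $x\ge 2a-(a_1+a_2-2)$, bound $|2A\cap(x+A)|\le\lceil(2a-x+1)/2\rceil$ by Lemma \ref{lem:hole} in that tight regime, and compare with the lower bound $b-b_x+1$ imposed by extremality to eliminate the parity term and pin down $x$ (and, via equality in the count, obtain \eqref{eq:a1a2dense}). The one loose phrase is your appeal to ``$\Delta T\ge\Delta\mu+1$ as in \eqref{eq:tmu}'', which was derived under $x<2a-(a_1+a_2-2)$ and is excluded here, but since you correctly substitute the Lemma \ref{lem:hole} count in the tight regime this is only cosmetic.
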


\begin{proof} Assume that $A_x$ is $1$--extremal. Let  $T=|2A|$ and $T_x=|2A_x|$.  By Theorem \ref{thm:3k-4} we have 
		\begin{equation}\label{eq:a}
	a=k-1+b,
	\end{equation}
	and
	\begin{equation}\label{eq:2all}
	2A=A_1\circ P'\circ A_2,
	\end{equation}
	where
	$P'$ is a progression with length
	\begin{equation}\label{eq:p'}
	|P'|=2a+1-(a_1+a_2).
	\end{equation}
Moreover, since $T_x>3(k+1)-4$, we have  $x> 2(k+1)-4$.
We write the integers in the interval $[2(k+1)-3,4(k+1)-3]$ as
	\begin{equation}\label{eq:x}
	x=2(k+1)-4+2b_x+\delta, \;\;  1\le b_x\le (k+1)-4,\; 0\le \delta \le1.
	\end{equation}
	
	By Corollary \ref{cor:ub}, since  $A_x$ is $1$--extremal, we have
	\begin{equation}\label{eq:tx}
	T_x\le 3(k+1)-4+b_x=T+k+b_x-b.
	\end{equation}
	Indeed,  the above inequality holds when $\delta =0$  in \eqref{eq:x} by Corollary \ref{cor:ub}. When $\delta =1$ then the inequality also holds because $\mu (k+1,T_x+1)=\mu(k+1,T_x)+2$ for our  range of $T_x$.

	We have
	\begin{equation}\label{eq:union}
	\Delta T=T_x-T=(k+1)-|2A\cap (x+A)|.
	\end{equation}
	
	It follows from  inequality \eqref{eq:tx} and \eqref{eq:union} that
	\begin{equation}\label{eq:int}
	|2A\cap (x+A)|\ge b-b_x+1.
	\end{equation}
	We note that $2A\cap (x+A)$ is contained in the interval $[x,2a]$ whose length, by \eqref{eq:a} and \eqref{eq:x}, is
	\begin{equation}\label{eq:2x-a}
	2a-x+1=2(b-b_x)+1-\delta.
	\end{equation}

	By Lemma \ref{lem:int0} we have $x\ge 2a-(a_1+a_2-2)$. Therefore,  $2A\cap (x+A)$ contains at most elements coming from  the initial segment of the stable set $A_1$ or from the final segment of the right stable set $A_2$ (see Figure \ref{fig:int} for an illustration). Therefore, Lemma  \ref{lem:hole} gives

	\begin{figure}		
		\begin{tikzpicture}
		\draw[lightgray,fill] (0,2) rectangle (2,2.1);
		\draw[fill] (2,2) rectangle (4,2.1);
		\draw[lightgray,fill] (4,2) rectangle (5,2.1);
		\draw[lightgray,fill] (0,1) rectangle (2,1.1);
		\draw[fill] (2,1) rectangle (9,1.1);
		\draw[lightgray,fill] (9,1) rectangle (10,1.1);
		\node[left] at (0,2) {$A$};
		\node[left] at (0,1) {$2A$};
		\node[left] at (0,0) {$x+A$};
		\node[above] at (1,2.1) {$A_1$};
		\node[above] at (3,2.1) {$P$};
		\node[above] at (4.5,2.1) {$A_2$};
		\node[above] at (1,1.1) {$A_1$};
		\node[above] at (5.5,1.1) {$P'$};
		\node[above] at (9.5,1.1) {$A_2$};
		\draw (5,2.5)--(5,0);
		\node[above] at (5,2.5) {$a$};
		\draw (10,2.5)--(10,-0.3);
		\node[above] at (10,2.5) {$2a$};
		\draw[lightgray,fill] (7.5,0) rectangle (9.5,0.1);
		\draw[fill] (9.5,0) rectangle (13.5,0.1);
		\draw[lightgray,fill] (13.5,0) rectangle (14.5,0.1);
		\draw (7.5,2.5)--(7.5,-0.3);
		\node[above] at (7.5,2.5) {$x$};
		\node[above] at (8.5,0.1) {$A_1$};
		\node[above] at (11.5,0.1) {$P$};
		\node[above] at (14,0.1) {$A_2$};
		
		\end{tikzpicture}
		
		\caption{Illustration of the evaluation of $|2A\cap (x+A)|$.}\label{fig:int}
	\end{figure}
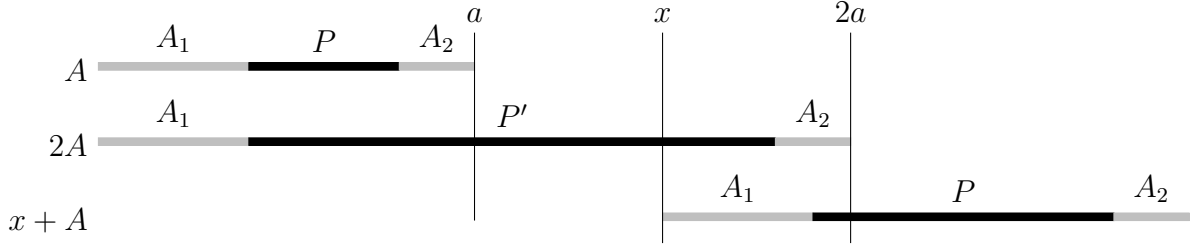

	\begin{equation}\label{eq:b-bx}
	|2A\cap (x+A)|\le  \left\lceil \frac{2a-x+1}{2}\right\rceil =b-b_x+1-\delta.
	\end{equation}
	Hence, according to  \eqref{eq:int}, the only possibility for $A_x$ to be extremal is that 
	$$
	\delta =0,
	$$ 
	and  equality holds in \eqref{eq:b-bx}, which can be rewritten as \eqref{eq:a1a2dense} in the Lemma; indeed, by \eqref{eq:p'}, we have $2A\cap [x,2a]=[x,2a-a_2]\circ A_2$ and hence $2A\cap (x+A)=(a+A)\cap (x+A)$.  Moreover, equality  in \eqref{eq:b-bx} implies that there is also equality in \eqref{eq:tx} which implies that   $x$ is given precisely by $\mu(k+1,T_x)$, proving \eqref{eq:xismu}. \end{proof}
	 
In the case that the extremal set $A$ in Lemma \ref{lem:chain} is a chain then the next lemmas further show that $D(A)$ is the only possible extension of  $A$ to a larger chain unless both stable sets in the stable decomposition of $A$ are $2$--progressions. 

\begin{lemma}\label{lem:consec} Let $A$ be a normal set of integers with $k=|A|$ and $|2A|\le 3k-4$ with stable decomposition $A=A_1\circ P\circ A_2$. If $A$ is a chain then $A_1$ and $A_2$ contain no pair of consecutive elements.
\end{lemma}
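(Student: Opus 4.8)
The plan is to argue by contradiction and by induction on $k$. Since the reflexion $A^-$ of a chain is again a chain, with stable decomposition $A_2^-\circ P\circ A_1^-$ in which $A_2^-$ is stable, it suffices to prove that the stable block $A_1$ contains no two consecutive integers. Write $A_1=\{0=a_0<a_1<\cdots<a_m\}$; stability gives $a_1\ge2$ and $a_m-1\notin A_1$, so a consecutive pair in $A_1$ must have the form $\{a_j,\,a_j+1=a_{j+1}\}$ with $1\le j\le m-2$. I would assume the statement for all chains of cardinality $<k$ whose doubling is at most $3(\mathrm{card})-4$, and suppose the chain $A$, of cardinality $k$, carries such a pair; the cases $k\le 5$ are read off the explicit list of small chains in Section \ref{sec:chains}.

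The first move is to peel off the largest element: as $A$ is a chain, $A':=A\setminus\{\max(A)\}$ is again a chain, of cardinality $k-1$. Deleting $\max(A)$ alters $A$ only above $\max(A_1\circ P)$, while on $[0,\max(A_1)]$ the set coincides with $A_1$, which already determines the left block of the stable decomposition; hence $A'$ admits a stable decomposition with the \emph{same} left block $A_1$. (The only way the removed element can fail to lie in the $A_2$-block is that $A_2=\{0\}$ with a very short $P$, in which case $A$ is itself stable, or a stable set followed by a segment of length at most two, and the assertion is immediate.) Writing $|2A'|=|2A|-\Delta$ with $\Delta\ge2$: if $|2A|\le 3k-5$, or if $\Delta\ge3$, then $|2A'|\le 3(k-1)-4$, and the induction hypothesis applied to $A'$ contradicts the presence of a consecutive pair in $A_1$.

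This leaves the case $|2A|=3k-4$ together with $\Delta=2$, i.e.\ $|2A'\cap(A'+\max A)|=|A'|-1$, the largest possible value. Here the $(3k-4)$--Theorem constrains $A$ tightly: $\ell(A)=2k-3$, $|P|\ge3$, and $A$ has exactly $k-3$ holes, all inside $A_1$ and $A_2$. When $j\ge2$ one can instead peel off $\min(A)=0$: the normalised chain $(A\setminus\{0\})-a_1$ still carries the pair $\{a_j-a_1,\,a_j-a_1+1\}$ inside its stable block and stays in the range $|2(\cdot)|\le 3(\cdot)-4$, so the induction hypothesis applies once more. This isolates the case $j=1$, i.e.\ $a_2=a_1+1$, where deleting either extreme element either leaves the admissible range or pushes the pair into the middle segment. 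For this residual case I would argue directly from the chain condition (iii): at the final construction step the new extreme element of $A$ was placed as far from the convex hull of $A_{k-1}$ as the prescribed doubling permits, and the presence of the block $\{0,a_1,a_1+1\}$ inside a stable set forces --- via the density bound of Lemma \ref{lem:hole} applied to $A_1$ and the exact count $\ell(A)=k+b$ --- a competitor set of the same cardinality and doubling, containing $A_{k-1}$ in its convex hull, but of strictly larger volume, contradicting the maximality in (iii). The hardest part will be precisely this last hands-on step: one must enumerate the ways the stable decomposition of $A_{k-1}$ can differ from that of $A$ exactly when deleting an extreme element raises the doubling by only $2$, locate the consecutive pair relative to the three blocks $A_1,P,A_2$, and turn Lemma \ref{lem:hole} together with the length formula into an explicit strictly better competitor.
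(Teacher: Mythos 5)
Your overall strategy (reduce to $A_1$ by reflexion, then descend to a smaller subchain) is in the right spirit, but the one-element-at-a-time induction does not close, and the point where it fails is exactly the point you flag but do not resolve. Deleting $\max(A)$ drops the doubling by $\Delta\ge 2$ only, so from $|2A|=3k-4$ with $\Delta=2$ you land at $|2A'|=3(k-1)-3$, outside the range of Theorem \ref{thm:3k-4}; your fallback of deleting $\min(A)$ instead is asserted to ``stay in the range $|2(\cdot)|\le 3(\cdot)-4$'' with no justification, and there is no reason both deletions cannot simultaneously have $\Delta=2$. The residual case $j=1$ is then handled only by a description of what ``one must'' do (enumerate decompositions, build a strictly larger competitor via Lemma \ref{lem:hole}); no competitor is constructed, so this is a plan, not a proof. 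A secondary unproved assertion is that $A'=A\setminus\{\max(A)\}$ inherits the \emph{same} left stable block $A_1$: this needs both that $A'$ is extremal in the sense of Theorem \ref{thm:3k-4} and a uniqueness statement for the stable decomposition, neither of which you establish.

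The paper avoids all of this by not descending step by step. It takes $y$ to be the \emph{largest} element of $A_1$ with $y+1\in A_1$ and $y+2\notin A_1$ (such $y$ exists because $a_1-1\notin A_1$ by stability, and then $y+2\le a_1$ forces $y+2\notin A$), and considers the \emph{minimal} subchain $A'$ in the chain sequence containing both $y$ and $a_1+1$. Minimality forces $\min(A')=y$ or $\max(A')=a_1+1$, so the normalization of $A'$ (or of its reflexion) has the form $\{0,1,3,\ldots\}$; such a set admits no stable decomposition $A_1'\circ P'\circ A_2'$, contradicting Theorem \ref{thm:3k-4} and hence the chain property. In other words, the key idea you are missing is to jump directly to the subchain whose extreme element sits at the top of the consecutive block inside $A_1$, where the forbidden pattern $\{0,1,3,\ldots\}$ appears at the boundary; this sidesteps entirely the bookkeeping of how the doubling changes under single-element deletions, which is where your induction breaks down.
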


\begin{proof}  Suppose that $A_1$ or $A_2$ contain a pair of consecutive elements. By replacing $A$ by its reflexion $A^-$ if necessary, we may assume that $A_1$ contains two consecutive elements. Let $a_1=\max (A_1)$. Let $y$ be the largest element in $A_1$ such that $y+1\in A_1$ and $y+2\not\in A_1$ (such element exists since $a_1-1\not\in A_1$.) Since $|P|\ge 3$, we have $a_1-1\not\in A$, and $\{a_1,a_1+1\}\subset A$.  

Let $A'$ be the minimal subchain of $A$ containing $\{y,a_1+1\}$. Then  $|2A'|\le 3|A'|-4$ and  $\min (A')=y$ or $\max (A')=a_1+1$. Assume $\min (A')=y$, the other case being similar. By Theorem \ref{thm:3k-4},  $A'$ can not be extremal since, once normalized, it has the form $\{0,1,3,\ldots\}$ and hence, it has no stable decomposition. This contradicts that $A$ is a chain.  
\end{proof}

\begin{lemma}\label{cor:doubling} Let $A$ and $A_x$  be as in  Lemma \ref{lem:chain}. 
	
	Assume that $A$ is a chain. 
	
Then   $A_x$ is $1$--extremal for some $x<2a$ if and only if  $x$ is even, $2a-x\ge a_1+a_2-2$, both $A_1$ and $A_2$ are $2$--progressions, and $(a-x+A)\cap A$ is a $2$--progression. In this case, $$A_x=A_1\circ P\circ A'_2,$$ where $A'_2= A_2\cup \{x-(a-a_2)\}$ is right--stable.
\end{lemma}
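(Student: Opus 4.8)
The plan is to prove the two implications separately; the forward one carries essentially all the weight. Throughout write $t=x-a$ and $p=|P|$, note that the stable decomposition gives $a=a_1+a_2+p-1$, and recall that $T=2k-1+b$ forces $c(k,T)=2$ and $a=k-1+b=\mu(k,T)$ by Theorem~\ref{thm:3k-4}. Suppose first that $A_x$ is $1$--extremal with $a<x<2a$. Since $1$--extremal sets are $1$--dimensional, Lemma~\ref{lem:chain} applies and yields three things: $x$ is even (the conclusion $\delta=0$ in its proof), $x=\mu(k+1,T_x)$, and the identity \eqref{eq:a1a2dense}, which we record as $|S|=\lceil(2a-x+1)/2\rceil$ where $S:=(a-x+A)\cap A=(A-t)\cap A\subseteq[0,2a-x]$. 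Feeding $a=\mu(k,T)$ and $x=\mu(k+1,T_x)$ into Lemma~\ref{lem:int0} gives $x\ge 2a-(a_1+a_2-2)$, i.e.\ $2a-x\le a_1+a_2-2$ and $t\ge p+1$. As $x$ and $2a$ are even, $2a-x+1$ is odd, so in fact $|S|=\tfrac12(2a-x)+1$: the set $S$ carries exactly one more point than half of the odd--length interval $[0,2a-x]$.

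The core step is to convert this single cardinality equation into rigid combinatorial structure. Since $t\ge p+1$ exceeds the length of the middle segment $P$, no pair $i,i+t$ can both lie in the segment part of $A$; hence each $i\in S$ forces $i$ and $i+t$ into the outer parts of $A$, which are controlled by the stable set $A_1$ near $0$ and the right--stable set $A_2$ near $a$. Applying Lemma~\ref{lem:hole} to $A_1$ and to $A_2^-$ bounds the number of points of $A$ in any initial block of $A_1$, resp.\ terminal block of $A_2$, by the ceiling of half its length; matching these against the exact value $|S|=\tfrac12(2a-x)+1$ leaves no slack and forces the tight case everywhere: $2a-x=a_1+a_2-2$ exactly, each relevant block attains the Lemma~\ref{lem:hole} bound, and $S$ is a $2$--progression. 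To upgrade ``dense block at the appropriate end'' to ``$A_1$ and $A_2$ are $2$--progressions throughout'' I invoke the chain hypothesis through Lemma~\ref{lem:consec}: $A_1$ and $A_2$ contain no two consecutive elements, and such a (right--)stable set which begins, resp.\ ends, with a maximal--density $2$--progression must be a $2$--progression all the way, for otherwise one produces a proper subchain of normal form $\{0,1,3,\dots\}$, which has no stable decomposition, contradicting extremality exactly as in the proof of Lemma~\ref{lem:consec}. Finally, with $A_1=\{0,2,\dots,a_1\}$, $A_2=\{0,2,\dots,a_2\}$ and $2a-x=a_1+a_2-2$, the new point is $x=2a-a_1-a_2+2$, whence $x-(a-a_2)=a-a_1+2=t+a_2>a_2$; setting $A'_2:=A_2\cup\{x-(a-a_2)\}$, a short concatenation computation gives $A_x=A_1\circ P\circ A'_2$, and one checks that $(A'_2)^-=\{0\}\cup\{t,t+2,\dots,t+a_2\}$ meets the (normalised) stability conditions (using $t=p+1$ and the parity constraints already extracted), so $A'_2$ is right--stable.

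For the converse, assume the four conditions together with the standing hypotheses $a<x\le 2a$ and $T_x\ge 3(k+1)-3$. The same concatenation computation (using $x-(a-a_2)>a_2$) gives $A_x=A_1\circ P\circ A'_2$, so $\max(A_x)=x$, and $A_x$ is $1$--dimensional because any $s\in(a-x+A)\cap A$ furnishes the relation $x+s=a+(s+t)$ involving the new element $x$, so no dimension is gained (Theorem~\ref{thm:kl}). It then remains to verify $x=\mu(k+1,T_x)$, after which Corollary~\ref{cor:ub} identifies $A_x$ as $1$--extremal; for this one recomputes $\Delta T=(k+1)-|S|$ from $|S|=\lceil(2a-x+1)/2\rceil$ --- the hypotheses that $S$ is a $2$--progression and that $2a-x\ge a_1+a_2-2$ forcing $S=(a+A)\cap(x+A)$ to have the maximal admissible size --- and checks that the resulting $T_x$ satisfies $\mu(k+1,T_x)=x$, which is precisely the arithmetic carried out inside the proof of Lemma~\ref{lem:chain}, run in reverse.

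The step I expect to be the main obstacle is the structural extraction in the second paragraph: turning the scalar equation $|S|=\tfrac12(2a-x)+1$ into simultaneous rigidity of $A_1$, $A_2$ and $S$. The delicate point is that $S$ is an \emph{intersection} of $A$ with a shift of $A$, not merely a restriction of $A$ to an interval, so one must argue that both ``copies'' of $A$ are locally forced to be $2$--progressions \emph{and} mutually aligned; the case analysis is heaviest precisely when the window $[0,2a-x]$ stretches across the middle segment $P$ into all three blocks of $A$ at once, and in tracking the parity constraints hidden in the representation $x=2(k+1)-4+2b_x$ (which is exactly what rules out the configurations in which $A'_2$ would fail to be right--stable).
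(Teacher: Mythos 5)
Your proof follows essentially the same route as the paper's: Lemma \ref{lem:chain} supplies $x$ even, $x=\mu(k+1,T_x)$ and $|X|=(2a-x)/2+1$ for $X=(a-x+A)\cap A$; Lemma \ref{lem:int0} confines the window to length at most $a_1+a_2-2$; a density count combined with Lemma \ref{lem:consec} (no two consecutive elements in $A_1$ or $A_2$ when $A$ is a chain) forces $X$, and hence $A_1$ and $A_2$, to be $2$--progressions; and the converse is obtained by observing that the hypotheses give equality in \eqref{eq:int} and \eqref{eq:b-bx}, run in reverse. The paper phrases the middle step slightly differently (a set of $(2a-x)/2+1$ elements in $[0,2a-x]$ is either the full $2$--progression or contains two consecutive elements, and the latter is excluded by Lemma \ref{lem:consec}), but this is the same argument as your density-matching via Lemma \ref{lem:hole}.

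One intermediate claim in your second paragraph is false: tightness of $|S|=(2a-x)/2+1$ does \emph{not} force $2a-x=a_1+a_2-2$. Take $A_1=A_2^-=\{0,2,4\}$ and $|P|=6$, so $k=10$, $a=13$, $b=4$, $T=23$; the choice $x=22=2a-4$ gives $X=\{0,2,4\}$, $T_x=31$, $c(11,31)=3$, $b_x=2$ and $x=\mu(11,31)$, so $A_x$ is $1$--extremal although $2a-x=4<6=a_1+a_2-2$. What is actually forced is only the inequality $2a-x\le a_1+a_2-2$ from Lemma \ref{lem:int0} (the ``$\ge$'' in the statement of the lemma, which you may have been trying to match, appears to be a typo: the paper's own proof derives ``$\le$''). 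None of the structural conclusions you need --- that $X$, $A_1$, $A_2$ are $2$--progressions and that $A_x=A_1\circ P\circ A_2'$ with $A_2'$ right--stable --- depends on the equality: since $x-(a-a_2)\ge a_2+2$ always holds in this range, the concatenation computation goes through unchanged. Excising the equality claim (and redoing the last displayed computation of $x-(a-a_2)$ with the inequality) leaves your argument intact and in line with the paper's.
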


\begin{proof} Suppose that $A_x$ is extremal for some $x<2a$.  
	
By  Lemma \ref{lem:chain}  we have $2a-x\le a_1+a_2-2$ and  $x=\mu (k+1,T_x)$, which is an even number, and $X=A\cap (a-x+A) \subset [0,2a-x]$ contains $(2a-x)/2+1$ elements.

Since $2a-x$ is even either $X$ is a $2$--progression or it contains two consecutive elements. Suppose the latter holds. As the length of $X$ is at most $2a-x\le a_1+a_2-2$, the set $X$ consists of an initial segment of $A_1$ and a final segment of $A_2$ (see Figure \ref{fig:int} for an illustration.) Hence $A_1$ or $A_2$ contain two consecutive elements. By Lemma \ref{lem:consec}  this contradicts that $A$ is a chain. 

Hence $X$ is a $2$--progression, which implies that  each of $A_1$ and $A_2$ must be a $2$--progression.  This completes the proof of the `if' part of the statement.

Suppose now that each of $A_1$, $A_2$ and $X$ is a $2$--progression, $x$ is even and $2a-x\ge a_1+a_2-1$. Then there is equality in \eqref{eq:b-bx} and \eqref{eq:int}, which implies that $A_x$ is $1$--extremal. 

We note that $A_x=(A_1\circ P\circ A_2)\cup \{x\}=A_1\circ P\circ (A_2\cup \{x-(a-a_2)\})$. Since each of $A_2$ and $X=(a-x+A)\cap A$ is a $2$--progression, then
$(A'_2)^-=\{ 0\}\cup ((x-a)+A_2^-)$ is also stable. This completes the proof.
 \end{proof}

\subsection{Beyond $3k-4$} 

The next step towards the proof of Theorem \ref{thm:chain} is to show that, under mild conditions, a $1$--extremal set of the form $D(A)$ can be only extended to a larger chain by iterating the operator $D$.

\begin{lemma}\label{lem:unique} Let $A$ be a $1$--extremal  set  with $k=|A|$, $T=|2A|$ and $a=\max (A)$, and let   $B=D(A)$.
	
	If  $B_x=B\cup \{x\}$ is $1$--extremal for some $2a<x\le 4a$ then $x\in \{3a,4a\}$.
	
	Moreover, if  $a=\mu(k,T)$ and  $\mu (k,T)>2^c$, where $c=c(k,T)$, then
	$$B_x=D^2(A ).$$ 
\end{lemma}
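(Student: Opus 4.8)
The plan is to work out the doubling $|2B_x|$ of a candidate extension $B_x = B\cup\{x\}$ with $2a<x\le 4a$, to compare it against the extremality bound from Corollary \ref{cor:ub}, and to show that the required volume $x=\max(B_x)$ forces $|2B\cap(x+B)|$ to be so large that $x$ can only sit at $3a$ or $4a$. First I would record that $B=D(A)=A\cup\{2a\}$ has $|B|=k+1$, $\max(B)=2a$, and by Lemma \ref{lem:dd}, $|2B|=T+k$ and $\max(B)=\mu(k+1,T+k)$, so $B$ is $1$--extremal with doubling constant $c(k+1,T+k)=c+1$. Writing $\Delta T = |2B_x|-|2B|$, equation \eqref{eq:deltaT0} (applied to $B$ in place of $A$) gives $\Delta T = (k+1)+1-|2B\cap(x+B)|$. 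On the other hand, if $B_x$ is $1$--extremal then Corollary \ref{cor:ub} says $\max(B_x)=x\le \mu(k+2,|2B_x|)$, and since the map $T\mapsto\mu(k+2,T)$ is increasing, the smallest admissible value of $|2B_x|$ consistent with a given $x$ grows with $x$; combining these two facts bounds $|2B\cap(x+B)|$ from below in terms of $x$.

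Next I would estimate $|2B\cap(x+B)|$ from above. The set $2B\cap(x+B)$ lies in the interval $[x,4a]$, of length $4a-x+1$. The element $2a$ sits in $B$, so the translate $x+B$ contains $x$ and $x+2a$, and $2B$ contains $4a$; the key observation is that $2B = 2A\cup(2a+A)\cup\{4a\}$, so inside $[2a+\max(A-\min\text{[top gap]}),4a]$ the set $2B$ looks like a shifted copy of $A$ (together with the top point $4a$), and $x+B$ in the same range looks like a shifted copy of $B$ near its top. A hole of $A$ near its top propagates to a hole in $2B\cap(x+B)$ unless $x$ is close enough to $4a$; since $A$ is $1$--extremal it has, by the analogue of Lemma \ref{lem:hole} / the $(3k-4)$ structure, holes reasonably high up unless $A$ itself is essentially a segment. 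Quantitatively this is the same computation as in Lemma \ref{lem:int0} and Lemma \ref{lem:chain}: one shows $|2B\cap(x+B)| \le \lceil (4a-x+1)/2\rceil + O(1)$ with equality forcing rigidity, and then matches this against the lower bound from the previous paragraph. The inequality $|2B\cap(x+B)| \ge$ (lower bound) combined with $\le\lceil(4a-x+1)/2\rceil+\dots$ leaves only $x\in\{3a,4a\}$, because for $x$ strictly between $2a$ and $3a$, resp.\ strictly between $3a$ and $4a$, the gap between the two bounds is positive.

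For the second, stronger conclusion, assume $a=\mu(k,T)$ and $\mu(k,T)>2^c$ with $c=c(k,T)$. The point is to rule out $x=3a$, leaving only $x=4a$, i.e.\ $B_x=D(B)=D^2(A)$. If $x=3a$ then $3a = \max(B_x)$ must equal $\mu(k+2,|2B_x|)$; I would compute $|2B_{3a}| = |2B|+\Delta T$ where $\Delta T = (k+2)-|2B\cap(3a+B)|$, and $2B\cap(3a+B)\subseteq[3a,4a]$ has length $a+1$, so $\Delta T \ge (k+2)-(a+1) = k+1-a$. Using $a=\mu(k,T)=2^{c-2}(k-c+b+1)$ one checks that $3a$ cannot coincide with $\mu(k+2,T')$ for the resulting $T'$: the formula $\mu(k+2,T')=2^{c'-2}(\cdots)$ with $c'\in\{c,c+1,c+2\}$ produces values that are either $2a$ (too small), or $4a$, or $3a\cdot(\text{something})$, and the hypothesis $\mu(k,T)>2^c$ is exactly what excludes the degenerate small-$a$ cases where $3a$ could accidentally land on a $\mu$-value. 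So $x=4a$ and $B_x=D(D(A))=D^2(A)$.

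The main obstacle I anticipate is the careful bookkeeping in the second paragraph: precisely controlling the positions of the holes of $A$ (equivalently, the gap structure near $\max(A)$ of a general $1$--extremal set, not just a stable one) so that the upper bound on $|2B\cap(x+B)|$ is sharp enough to exclude every $x$ outside $\{3a,4a\}$, and then tracking which $\mu$-values $3a$ can possibly equal. This is where the hypothesis $\mu(k,T)>2^c$ must be used with care, since it is exactly the threshold separating the generic case from the small exceptional configurations.
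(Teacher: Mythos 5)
Your plan hinges on an upper bound $|2B\cap(x+B)|\le\lceil(4a-x+1)/2\rceil+O(1)$ obtained from the hole structure of $A$ ``by the analogue of Lemma~\ref{lem:hole} / the $(3k-4)$ structure.'' That bound is not available here: Lemma~\ref{lem:hole} applies to stable sets, and the stable decomposition exists only when $|2A|\le 3|A|-4$, whereas in this lemma $A$ is an arbitrary $1$--extremal set whose doubling constant may be as large as about $k/2$. Concretely, for $2a<x<3a$ one has $(x+B)\cap 2B=(x+A)\cap(2a+A)$, whose cardinality is the autocorrelation $|A\cap(A+(x-2a))|$; this is controlled by no density--$1/2$ principle, and in any case the trivial bound $k-1$ is already smaller than $\lceil(4a-x+1)/2\rceil\ge (a+1)/2$ for most of the relevant range, so the count yields no contradiction. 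When the intersection is large, $T_x$ is small and $\mu(k+2,T_x)$ is small, so your two bounds simply fail to collide, and the range $2a<x<3a$ is not excluded. The paper's proof does something quite different here: it introduces the competitor $B'=A\cup\{x-a\}$ and uses $a+A\subseteq 2A$ to show $|2D(B')|\le|2B_x|$ while $\max(D(B'))=2(x-a)>x$, so $D(B')$ witnesses that $B_x$ is not $1$--extremal. (For $3a<x<4a$ your interval observation is essentially right: $(x+B)\cap 2B\subseteq\{4a\}$ forces $|2B_x|=|2D(B)|$, and $D(B)$ has larger maximum; note this, and the whole first claim, is proved in the paper without assuming $a=\mu(k,T)$, which your $\mu$--based setup requires from the outset.)

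For the exclusion of $x=3a$ your estimate $\Delta T\ge (k+2)-(a+1)$ is far too weak (it is typically negative). What is needed is the exact count $|(3a+B)\cap 2B|=2$, coming from $3a\in 2a+A$ and $4a\in 2B$, which gives $T_x=T+2k$ exactly. Then $4a=\mu(k+2,T+2k+1)=\mu(k+2,T_x)+2^{c'-2}$ with $c'=c(k+2,T_x)\le c+2$, and extremality forces $3a\ge\mu(k+2,T_x)\ge 4a-2^{c}$, i.e. $a\le 2^c$, contradicting $\mu(k,T)>2^c$. You correctly guessed where the hypothesis enters, but without pinning down $T_x$ the comparison of $3a$ with the relevant $\mu$--value cannot be carried out.
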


\begin{proof} Suppose  that $B_x=B\cup \{ x\}$ is extremal for some $2a<x\le 4a$. As usual we consider
	\begin{equation}\label{eq:2bx}
		|2B_x|=|2B|+|B|+1-|(x+B)\cap 2B|.
	\end{equation}
	Since $B=D(A)=A\cup \{2a\}$, we have
	\begin{equation}
		|(x+B)\cap 2B|=|(x+A)\cap (2a+A)|\label{eq:xB}
	\end{equation}
	If $x>3a$ then  $|(x+B)\cap 2B|\le 1$ (the intersection  contains at most $4a$) and, according to \eqref{eq:2bx},  $B_x$ has not smaller doubling than $D(B)$ but has smaller volume unless $x=4a$. On the other hand, if $2a<x<3a$ then  $B'=A\cup \{x-a\}$ has doubling (see Figure \ref{fig:bx} for an illustration)
	\begin{align*}
		|2B'|&=|2A|+|A|+1-|(x-a+A)\cap 2A)|\\
		&\le |2A|+|A|+1-|(x-a+A)\cap (a+A)|\\
		&\stackrel{\eqref{eq:xB}}{=}|2B|+1-|(x+B)\cap 2B|\\
		&\stackrel{\eqref{eq:2bx}}{=}|2B_x|-|B|.
	\end{align*}
	It follows that $D(B')$ has the same cardinality as $B_x$ and $|2D(B')|=|2B'|+|B|\le |2B_x|$, while its larger element is  $2(x-a)=x+(x-2a)>x$, contradicting that $B_x$ is extremal. 
	
	\begin{figure}[h]
		\begin{center}
			\begin{tikzpicture}
			\foreach \i in {0,1,2,3,4}
			{
				
				\draw[dotted] (2*\i,-0.5)--(2*\i,4.5);
				\draw (-0.5,\i)--(10,\i);
			}
			
			\node[above] at (0,4.5) {$0$};
			\node[above] at (2,4.5) {$a$};
			\node[above] at (4,4.5) {$2a$};
			\node[above] at (6,4.5) {$3a$};
			\node[above] at (8,4.5) {$4a$};
			
			\draw[lightgray,fill] (0,3.9) rectangle (2,4.1);
			\draw[fill] (4,4) circle (2pt);
			\node[left] at (-0.5,4) {$B$};
			\node[above] at (1,4) {$A$};
			
			\draw[gray,fill] (0,2.9) rectangle (4,3.1);
			\draw[lightgray,fill] (4,2.9) rectangle (6,3.1);
			\draw[fill] (8,3) circle (2pt);
			\node[left] at (-0.5,3) {$2B$};
			\node[above] at (2,3) {$2A$};
			\node[above] at (5,3) {$A$};

			\draw[lightgray,fill] (5.2,1.9) rectangle (7.2,2.1);
			\draw[fill] (9.2,2) circle (2pt);
			\node[left] at (-0.5,2) {$x+B$};
			\node[above] at (6,2) {$x+A$};
			\draw[dotted] (5.2,2)--(5.2,4.5);
			\node[above] at (5.2,4.5) {$x$};
			
			\draw[lightgray,fill] (0,0.9) rectangle (2,1.1);
			\draw[fill] (3.2,1) circle (2pt);
			\node[left] at (-0.5,1) {$B'$};
			\node[above] at (1,1) {$A$};

			\draw[lightgray,fill] (3.2,-0.1) rectangle (5.2,0.1);
			\draw[gray,fill] (0,-0.1) rectangle (4,0.1);
			\draw[fill] (6.4,0) circle (2pt);
			\node[left] at (-0.5,0) {$2B'$};
			
			\end{tikzpicture}
		\end{center}
		\caption{An illustration of the computation of $2B_x$ and $2B'$.}\label{fig:bx}
	\end{figure}
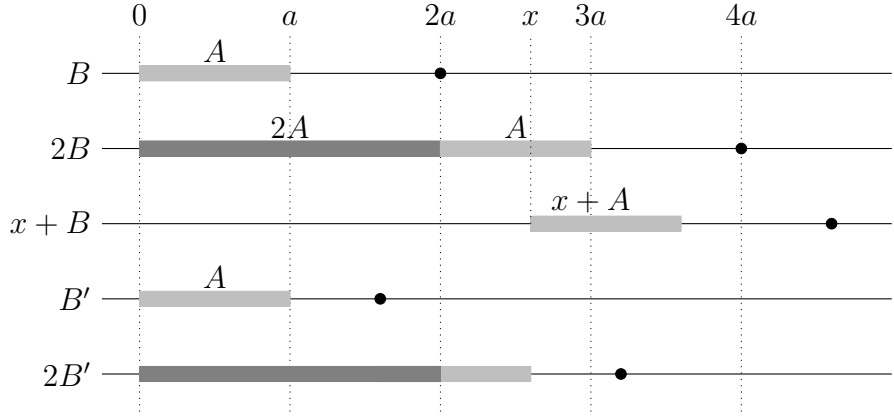

	If $x=3a$,  then   $|(x+B)\cap 2B|=2$ which yields $$|T_x|=|2B_x|=|2B|+|B|-1=|T|+2k.$$ 
	
	Assume now that $a=\mu (k,T)$ and $\mu (k,T)>2^c$. We have $2a=\mu (k+1,T+k)$ and 
	$$4a=\mu (k+2,T+2k+1)=\mu (k+2, T_x+1)=\mu (k+2, T_x)+2^{c'-2},$$
	where $c'=c(k+2,T_x)\le c+2$. If $B_x$ with $x=3a$ is extremal, we have 
	$$3a\ge \mu (k+2, T_x)\ge 4a-2^c,$$ and hence $a\le 2^c$, a contradiction.  It follows that $B_x$ is not extremal for $x=3a$ and the only choice left is $x=4a$. Thus $B_x=D^2(A)$.
\end{proof}

We remark that the condition $\mu (k,T)=2^{c-2}(k-c+b+1)>2^c$, $c=c(k,T)$, in Lemma \ref{lem:unique} is only violated when $k\le c-b+3$, namely when $b=1$ and $c=k-2$. In such cases, indeed, the choice $x=3a$ in Lemma \ref{lem:unique} can give rise to an extremal set which is not of the form $D(B)$ for some $B$. A simple example is $B=\{0,1,2,4\}=D(\{0,1,2\})$ which is contained in the $1$--extremal set $\{0,1,2,4,8\}\neq D(B)$.  By Lemma \ref{lem:cx}, if the doubling constant of a chain $A$ is smaller than $k-2$ then any chain $B$ containing $A$ has also doubling constant smaller than $|B|-2$. Therefore all examples of chains for which the condition $\mu (k,T)=2^{c-2}(k-c+b+1)>2^c$, $c=c(k,T)$, in Lemma \ref{lem:unique} does not apply are the ones obtained from $\{0,1,2,4\}$. We shall discuss the structure of chains arising from this kind of examples later on.

Lemma \ref{lem:unique} only handles the case when a $1$--extremal set of the form $B=D(A)$  is extended by adding one element to the right. Next Lemma considers extending $B$ to the left, namely, extending its reflexion $B^-$ by adding one element to the right.  As it happens, showing that again $D(B^-)$ is the only extension of $B^-$ to a larger chain, requires the use of the full strength of the assumptions, namely, that $A\cup \{y\}$ is not a chain for $\max(A)<y<2\max(A)$. 

\begin{lemma}\label{lem:lunique} Let $A$ be a chain with $k=|A|, T=|2A|$ and    $a=\max (A)$. Assume that $a=\mu (k,T)$ and that  $\mu (k,T)>2^c,\; c=c(k,T)$, and 
	$$
	y< \min\{ \mu (k+1,|2(A\cup \{y\})|),  \mu (k+1,|2(A^-\cup \{y\})|)\},\; \text{for all } a<y<2a.
	$$
	
	Let $B=(D(A))^-$.   
	
	Then $B_x=B\cup \{x\}$ is extremal for some $2a<x\le 4a$ if and only if $$B_x=D(B).$$	
\end{lemma}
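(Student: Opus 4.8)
The plan is to prove the two implications separately, the ``if'' direction at once and the converse by eliminating every $x$ with $2a<x<4a$. For the ``if'' direction, note that $B=(D(A))^-$ has $|2B|=|2D(A)|=T+k$ and, since $\max A=\mu(k,T)$, Lemma~\ref{lem:dd} gives $\max B=2a=\mu(k+1,T+k)$; applying Lemma~\ref{lem:dd} once more, $D(B)=B\cup\{4a\}$ is one-dimensional with $\max(D(B))=4a=\mu(k+2,|2D(B)|)$, hence extremal. For the converse, suppose $B_x=B\cup\{x\}$ is extremal, so in particular one-dimensional, with $2a<x\le 4a$. Since an extremal set with $\ell$ elements and doubling $\tau$ has maximum element at least $\mu(\ell,\tau)$ (Corollary~\ref{cor:ub}), it suffices, whenever $x<4a$, to exhibit a one-dimensional set with $k+2$ elements, doubling $|2B_x|$, and maximum element strictly larger than $x$: this contradicts extremality of $B_x$. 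All doubling computations use $2B=\{0\}\cup(2a-A)\cup(4a-2A)$ together with $|2B_x|=|2B|+|B|+1-|(x+B)\cap 2B|$.

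Dispose first of $3a<x\le 4a$ and of $x=3a$. If $3a<x\le 4a$, then $(x+B)\setminus\{x\}$ lies above $4a=\max(2B)$, so $|(x+B)\cap 2B|\le 1$; since $B_x$ is one-dimensional this intersection is nonempty, hence $|(x+B)\cap 2B|=1$ and $|2B_x|=2k+T+1=|2D(B)|$, so $\mu(k+2,|2B_x|)=4a$ forces $x=4a$. If $x=3a$, then $\{3a,4a\}\subseteq(x+B)\cap 2B$ and a direct check gives $|(x+B)\cap 2B|=2$, so $|2B_x|=2k+T$; using $\mu(k,\tau+1)-\mu(k,\tau)=2^{c(k,\tau)-2}$ and $\mu(k+2,2k+T+1)=4a$ one gets $\mu(k+2,2k+T)\ge 4a-2^{c}$, which exceeds $3a$ because $a=\mu(k,T)>2^{c}$; so $B_x$ is not extremal.

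It remains to treat $2a<x<3a$. Put $t=x-2a\in(0,a)$ and $y=x-a=a+t\in(a,2a)$; normalising $(2a-A)\cup\{x\}$ (subtract $a$) identifies it with $A^-\cup\{y\}$, so $B_x=\{0\}\cup\bigl(a+(A^-\cup\{y\})\bigr)$. If $A^-\cup\{y\}$ is \emph{not} one-dimensional, then since $A^-$ is one-dimensional Theorem~\ref{thm:kl} (adjoining $z>\max$ to a one-dimensional set preserves one-dimensionality exactly when $z\in 2A^--A^-$) gives $(y+A^-)\cap 2A^-=\emptyset$; carrying this through the intersection count yields $|(x+B)\cap 2B|\le 1$, so again $|2B_x|=2k+T+1$ and $\mu(k+2,|2B_x|)=4a>x$, a contradiction. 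So assume $A^-\cup\{y\}$ is one-dimensional. Then the hypothesis of the Lemma applies with this $y$, giving $y<\mu(k+1,T_y)$ where $T_y:=|2(A^-\cup\{y\})|$. A bookkeeping of intersection counts shows $|(x+B)\cap 2B|=\varepsilon+|A\cap(2A+t)|$, where $\varepsilon=1$ if $x\in 2B$ and $\varepsilon=0$ otherwise, and $|A\cap(2A+t)|=|(y+A^-)\cap 2A^-|=T+k+1-T_y$; hence $|2B_x|=T_y+k+1-\varepsilon$. If $x\notin 2B$ (so $\varepsilon=0$), pick by Corollary~\ref{cor:ub} a one-dimensional $S^{*}$ with $k+1$ elements, doubling $T_y$, and $\max S^{*}=\mu(k+1,T_y)\ge y+1$; then $D(S^{*})$ is one-dimensional (Lemma~\ref{lem:dd}), has $k+2$ elements, doubling $T_y+k+1=|2B_x|$, and maximum $2\mu(k+1,T_y)\ge 2y+2>y+a=x$, contradicting extremality of $B_x$.

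The \emph{main obstacle} is the residual sub-case $x\in 2B$ (i.e.\ $\varepsilon=1$), where $|2B_x|=T_y+k$ is one short of the doubling of the obvious $D$-competitor; moreover a relation count via Theorem~\ref{thm:kl} shows that the extra additive relations created in $B_x$ are linearly dependent on the others, so a pure dimension argument does not separate $B_x$ from $A^-\cup\{y\}$. The intended route is to build the competitor from $D(A)$: since $D(A)$ has $k+1$ elements and doubling $T+k$, for $2a<z<3a$ one has $|2(D(A)\cup\{z\})|=T+2k+2-|2D(A)\cap(D(A)+z)|$, and $|2D(A)\cap(D(A)+z)|$ is non-increasing in $z$ and equals $2$ at $z=3a$; since $|A\cap(2A+t)|\ge 1$ (because $A^-\cup\{y\}$ is one-dimensional) and $x=a+y<3a$, one can select $z$ with $x<z\le 3a$ for which $|2D(A)\cap(D(A)+z)|=|A\cap(2A+t)|+1$, producing a one-dimensional $D(A)\cup\{z\}$ with $k+2$ elements, doubling exactly $|2B_x|$, and maximum $z>x$, once more contradicting extremality of $B_x$. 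Pinning down that the prescribed value of the intersection is attained at some $z\in(x,3a]$ is the delicate point, and it is there that the second half of the hypothesis — that $A$ itself (not only $A^-$) cannot be extended to an extremal set by an element of $(a,2a)$ — is expected to be used; so this is the step I would spend the most effort on.
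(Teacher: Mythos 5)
Your ``if'' direction and your treatment of $3a<x\le 4a$ and $x=3a$ coincide with the paper's argument, and your reduction of the range $2a<x<3a$ to the auxiliary set $A^-\cup\{y\}$, $y=x-a$, is essentially the paper's reduction to $B'=A\cup\{x-a\}$ (the lemma's hypothesis covers both $A$ and $A^-$, so this difference is immaterial). Your disposal of the sub-cases where $A^-\cup\{y\}$ fails to be one-dimensional, or where $x\notin 2B$ (your $\varepsilon=0$), is also sound. But the sub-case you flag as the ``main obstacle'' --- $x\in 2B$, where $|2B_x|=T_y+k$ and the natural competitor $D(A^-\cup\{y\})$ has doubling $T_y+k+1=|2B_x|+1$ --- is precisely the case the paper actually treats (it notes that one-dimensionality of $B_x$ forces $x\in 2a+2A$), and your proposal leaves it genuinely open. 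The route you sketch for it, hunting for a $z\in(x,3a]$ with $|2D(A)\cap(D(A)+z)|$ equal to a prescribed value, is not pinned down and is unlikely to close: there is no reason that intersection size passes through the required value at an admissible $z$, nor that such a $D(A)\cup\{z\}$ is one-dimensional.

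The missing idea is that one should not insist on a competitor with \emph{exactly} the doubling of $B_x$; the off-by-one discrepancy is absorbed by the arithmetic of $\mu$. Concretely, with $T_x=|2B_x|$ and $T'=|2B'|$ one has $|2D(B')|=T'+k+1=T_x+1$, and the hypothesis $x-a<\mu(k+1,T')$ gives
$$
2(x-a)<\mu(k+2,T'+k+1)=\mu(k+2,T_x+1)=\mu(k+2,T_x)+2^{c_x-2}\le x+2^{c_x-2},
$$
the last step because extremality of $B_x$ forces $\mu(k+2,T_x)\le x$. Hence $\mu(k+2,T_x)\le x<2a+2^{c_x-2}$, and substituting $\mu(k+2,T_x)=2^{c_x-2}(k+2-c_x+b_x+1)$ and $a=2^{c-2}(k-c+b+1)$ yields a numerical contradiction in each of the cases $c_x=c+2$ (it forces $b_x<b-(k+1)<0$) and $c_x=c+1$ (it forces $b_x<b-1$, i.e.\ $T_x<T+k$, impossible). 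This purely quantitative step replaces the competitor construction you were seeking and closes the gap.
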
 

\begin{proof} Suppose  that $B_x=B\cup \{ x\}$ is extremal for some $2a<x\le 4a$. We have
	$$
	|2B_x|=|2B|+|B|+1-|(x+B)\cap 2B|.
	$$
	We again consider three cases (see an illustration in Figure \ref{fig:bx-}). 
	
	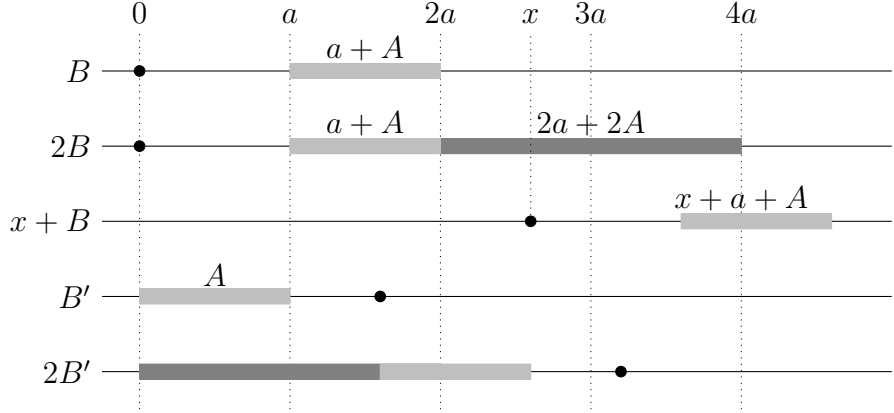
\begin{figure}[h]
		\begin{center}
			\begin{tikzpicture}
			\foreach \i in {0,1,2,3,4}
			{
				
				\draw[dotted] (2*\i,-0.5)--(2*\i,4.5);
				\draw (-0.5,\i)--(10,\i);
			}
			
			\node[above] at (0,4.5) {$0$};
			\node[above] at (2,4.5) {$a$};
			\node[above] at (4,4.5) {$2a$};
			\node[above] at (6,4.5) {$3a$};
			\node[above] at (8,4.5) {$4a$};
			
			\draw[lightgray,fill] (2,3.9) rectangle (4,4.1);
			\draw[fill] (0,4) circle (2pt);
			\node[left] at (-0.5,4) {$B$};
			\node[above] at (3,4) {$a+A$};
			
			\draw[gray,fill] (4,2.9) rectangle (8,3.1);
			\draw[lightgray,fill] (2,2.9) rectangle (4,3.1);
			\draw[fill] (0,3) circle (2pt);
			\node[left] at (-0.5,3) {$2B$};
			\node[above] at (6,3) {$2a+2A$};
			\node[above] at (3,3) {$a+A$};

			\draw[lightgray,fill] (7.2,1.9) rectangle (9.2,2.1);
			\draw[fill] (5.2,2) circle (2pt);
			\node[left] at (-0.5,2) {$x+B$};
			\node[above] at (8,2) {$x+a+A$};
			\draw[dotted] (5.2,2)--(5.2,4.5);
			\node[above] at (5.2,4.5) {$x$};
			
			\draw[lightgray,fill] (0,0.9) rectangle (2,1.1);
			\draw[fill] (3.2,1) circle (2pt);
			\node[left] at (-0.5,1) {$B'$};
			\node[above] at (1,1) {$A$};

			\draw[gray,fill] (0,-0.1) rectangle (4,0.1);
			\draw[lightgray,fill] (3.2,-0.1) rectangle (5.2,0.1);
			
			\draw[fill] (6.4,0) circle (2pt);
			\node[left] at (-0.5,0) {$2B'$};
			
			\end{tikzpicture}
		\end{center}
		\caption{An illustration of the computation of $2B_x$ and $2B'$.}\label{fig:bx-}
	\end{figure}	
	
	Suppose first that $3a<x\le 4a$. Then $(x+B)\cap 2B$ contains at most the point $x$. Hence,  $B_x$ has no smaller doubling than $D(B)$ but has a  smaller volume unless $x=4a$. 
	
	Consider now $2a<x<3a$. Set $k=|A|, T=|2A|, c=c(k,T)$ and $b=b(k,T)$. Since $B_x$ is $1$--dimensional we have $x\in 2(\{0\}\cup (a+A))$, which implies $x\in 2a+2A$. Hence,
	\begin{align*}
		T_x=|2B_x|&=|2(\{0\}\cup (a+A)\cup \{x\})|\\
		&= 1+|(a+A^-)\cup (x+a+A) \cup (2a+2A)|+1\\
		&=1+|a+A|+|(x+a+A) \cup (2a+2A)|.
	\end{align*}
	On the other hand, for $B'=A\cup \{x-a\}$ we have
	$$
	T'=|2B'|=|(x-a+A)\cup 2A|+1,
	$$
	and
	$$
	|2D(B')|=|B'|+|2B'|=1+|A|+|(x+a+A)\cup (2a+2A)|+1=T_x+1.
	$$
	By assumption,  $x-a<\mu(k+1,T')$ and hence,
	\begin{align}
		2(x-a)&<\mu (k+2,T'+k+1)\nonumber\\
		&=\mu (k+2,T_x+1)\nonumber\\
		&=\mu (k+2,T_x)+2^{c_x-2}\nonumber\\
		&\le x+2^{c_x-2},\label{eq:x2a} 
	\end{align}	
	where $c_x=c(k+2,T_x)$ satisfies $$c+1\le c_x\le c+2.$$ 
	 It follows that
	$$
	\mu (k+2,T_x)\le x<2a+2^{c_x-2},
	$$
	the lower bound since $B_x$ is extremal, the upper bound from \eqref{eq:x2a}.
	By plugging in the values of $\mu (k+2,T_x)=2^{c_x-2}(k+2-c_x+b_x+1)$ and $a=\mu (k,T)=2^{c-2}(k-c+b+1)$ we obtain
	$$
	2^{c_x-2}(k+2-c_x+b_x+1)<2^{c-1}(k-c+b+1).
	$$
	If $c_x=c+2$ the above inequality leads to $b_x<b-(k+1)<0$, which is not possible. If $c_x=c+1$ then we get $
	b_x<b-1$, and hence $T_x<T+k$, again a  contradiction.	
	
	Finally, if $x=3a$,  then   $|(x+B)\cap 2B|=2$ which yields $$|T_x|=|2B_x|=|2B|+|B|-1=|T|+2k.$$ 
	
	Assume now that $\mu (k,T)>2^c$. We have $2a=\mu (k+1,T+k)$ and 
	$$4a=\mu (k+2,T+2k+1)=\mu (k+2, T_x+1)=\mu (k+2, T_x)+2^{c_x-2}.$$
	If $B_x$ with $x=3a$ is $1$--extremal, we have 
	$$3a\ge \mu (k+2, T_x)\ge 4a-2^c,$$ and hence $a\le 2^c$, a contradiction.  It follows that $B_x$ is not $1$--extremal for $x=3a$ and the only choice left is $x=4a$. Thus $B_x=D^2(A)$.		
\end{proof}

\subsection{The case of $2$--progressions}

It follows from Lemma  \ref{cor:doubling} that a chain $A$ with doubling $T\le 3|A|-4$ can only be extended to $D(A)$ unless both stable sets in its stable decomposition are $2$--progressions. If this is not the case then lemmas \ref{lem:unique} and \ref{lem:lunique} show that every further extension is obtained by iterate applications of the operator $D$, which proves Theorem \ref{thm:chain} in this situation. In order to complete the proof of Theorem \ref{thm:chain} it remains to analyze the case of stable sets which are $2$--progressions. This is the purpose of the final part of this Section.

\begin{lemma}\label{lem:2prog2odd} Let $A=A_1\circ P\circ A_2$ and $k=|A|$. Assume that both of $A_1$ and $A_2$ are $2$--progressions and $|P|\ge 4$. 

Let $y>x>a=\max (A)$ such that $T_x=|2A_x|>3(k+1)-4$, $A_x=A\cup \{x\}$.
 
\begin{enumerate}
\item[(i)] If $A_{xy}=A_x\cup \{y\}$ is  a chain then $A_{xy}=D(A_x).$
\item[(ii)] If $A_{xy}'=(A_x)^-\cup \{y\}$ is a chain and $y>x+2$ then $A_{xy}=D((A_x)^-)$.
\end{enumerate}
\end{lemma}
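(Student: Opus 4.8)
The plan is to run the argument of Lemmas~\ref{lem:unique} and~\ref{lem:lunique} with $A_x$ (respectively $(A_x)^-$) playing the role of $D(A)$ there; the hypothesis $|P|\ge 4$ takes over the work of the conditions ``$\mu(k,T)>2^c$'', and the extra assumption $y>x+2$ in part~(ii) replaces the reflexion hypothesis of Lemma~\ref{lem:lunique}. The first step is to fix the structure of $A_x$. Deleting the top element of the chain $A_{xy}$ (respectively $A_{xy}'$) leaves a chain, so $A_x$ is one--dimensional, whence $x\le 2a$ with $a=\max(A)$. If $x=2a$ then $A_x=D(A)$, and if $x<2a$ then Lemma~\ref{cor:doubling} applies; in either case
$$
A_x=A_1\circ P\circ A_2',\qquad A_2'=A_2\cup\{\max(A_2)+(x-a)\},
$$
with $A_2'$ right stable. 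The crucial point is that $x-a\ge 3$: if $x-a=1$ then $(a-x+A)\cap A$ would be a segment of length $|P|-1\ge 3$ rather than the $2$--progression demanded by Lemma~\ref{cor:doubling}; and if $x-a=2$ then $A_2'$ is a $2$--progression and $A_x=A_1\circ P\circ A_2'$ is an extremal set with, by Theorem~\ref{thm:3k-4}, $|2A_x|=2k+b+2\le 3(k+1)-4$, contrary to hypothesis. Hence $A_2'$ is \emph{not} a $2$--progression: it is $A_2$ with one element adjoined a distance $\ge 3$ above $\max(A_2)$. Dually $(A_x)^-=(A_2')^-\circ P^-\circ A_1^-$ has isolated smallest element $0$ followed by a gap of length $x-a\ge 3$. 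In particular $2A_x=2A\cup(x+A)\cup\{2x\}$ has its body inside $[0,x+a]$ with $2x$ an isolated maximum, and $2(A_x)^-=\{0\}\cup(x-a+A^-)\cup(2(x-a)+2A^-)$ has its body inside $[x-a,2x]$ with $0$ an isolated minimum.

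For part~(i) I would write, as usual,
$$
|2A_{xy}|=|2A_x|+|A_x|+1-|(y+A_x)\cap 2A_x|,
$$
and split on the position of $y$. If $y>2x$ the intersection is empty and $A_{xy}$ is not one--dimensional. If $y=2x$ then $A_{xy}=D(A_x)$, the desired conclusion. If $x+a<y<2x$ then $(y+A_x)\cap 2A_x=\{2x\}$, so $|2A_{xy}|=|2A_x|+|A_x|=|2D(A_x)|$, while $\max(A_{xy})=y<2x=\max(D(A_x))=\mu(k+2,|2D(A_x)|)$ by Lemmas~\ref{lem:chain} and~\ref{lem:dd}; since $A_{xy}\cap[A_x]=A_x=D(A_x)\cap[A_x]$ and the two sets have equal cardinality and doubling, $D(A_x)$ is an admissible competitor for $A_{xy}$ of strictly larger volume, contradicting that $A_{xy}$ is a chain. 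There remains the range $x<y\le x+a$, treated below.

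For part~(ii) the same scheme applies with $(A_x)^-$ and the description of $2(A_x)^-$ above. The two values $y=x+1$ and $y=x+2$ are special: adjoining either of them to $(A_x)^-$ merely prolongs the $2$--progression $A_1^-$ (or, when $A_1=\{0\}$, the segment $P^-$) at the top of $(A_x)^-$, and $(A_x)^-\cup\{y\}$ can then genuinely be a chain; these are exactly the two values removed by the hypothesis $y>x+2$. Once they are excluded, the ranges $y>2x$, $y=2x$ (yielding $D((A_x)^-)$) and $x+a<y<2x$ are disposed of as in part~(i), leaving again the range $x<y\le x+a$.

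The main obstacle is this range $x<y\le x+a$, where $(y+A_x)\cap 2A_x$ may be large. A competitor for the chain $A_{xy}$ must, by condition~(iii) in the definition of a chain, agree with $A_x$ on $[A_x]$, hence is of the form $A_x\cup\{w\}$ with $w>y$; so the task is to produce, for every $y$ in this range with $y\ne 2x$, a $w\in(y,2x]$ with $|(w+A_x)\cap 2A_x|=|(y+A_x)\cap 2A_x|$, that is $|2(A_x\cup\{w\})|=|2A_{xy}|$, which contradicts the maximality of the volume of $A_{xy}$. This is where the structure of $A_x$ enters: because $A_2'$ is not a $2$--progression ($A_x$ has no dense tail beyond the long segment $P$), the overlap $|(w+A_x)\cap 2A_x|$, as $w$ slides through $[x,2x]$, is governed by how the segment $P$ inside $w+A_x$ meets the segment $P'$ of $2A$, and each of its values---other than the eventual value $1$ on $(x+a,2x]$, already handled in part~(i)---is attained at a larger argument; the hypothesis $|P|\ge 4$, equivalently that $a=\mu(k,|2A|)=k+b-1$ is large (the analogue of ``$\mu(k,T)>2^c$'' in Lemmas~\ref{lem:unique}--\ref{lem:lunique}, which by the remark after Lemma~\ref{lem:unique} fails only for the chains descending from $\{0,1,2,4\}$, namely those with $|P|=3$), is precisely what makes this sliding argument and the accompanying $\mu$--estimates (split according to whether $c(k+2,|2A_{xy}|)$ equals $c(k+1,T_x)$, $c(k+1,T_x)+1$ or $c(k+1,T_x)+2$, exactly as in Lemmas~\ref{lem:unique} and~\ref{lem:lunique}) work. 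I expect this overlap analysis in the body--region case to be the only genuinely delicate step; every other range reproduces a volume comparison already made in the preceding lemmas.
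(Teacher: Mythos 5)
You handle the outer ranges exactly as the paper does: $y>2x$ destroys one--dimensionality, $y=2x$ yields $D(A_x)$, and for $x+a<y<2x$ the overlap $(y+A_x)\cap 2A_x$ reduces to at most $\{2x\}$, so $D(A_x)$ is an admissible competitor of equal cardinality and doubling but larger volume. Your preliminary analysis of $A_x$ is also essentially right, but note that Lemma~\ref{lem:chain} together with Lemma~\ref{lem:int0} already gives $x=\mu(k+1,T_x)\ge 2a-(a_1+a_2-2)$, hence $x>a+4$ when $|P|\ge 4$; your weaker bound $x-a\ge 3$ would not suffice later, because the decisive inequality in the hard range is $x+a<2x-4=\mu(k+2,T-1)$.

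The genuine gap is the range $x<y\le x+a$, which you explicitly leave as a programme rather than a proof. This is where all the content of the lemma lies, and the mechanism you propose --- for each such $y$ produce a $w\in(y,2x]$ with $|(w+A_x)\cap 2A_x|=|(y+A_x)\cap 2A_x|$ --- already fails at the endpoint $y=x+a$: there the overlap $g(y)=|(y+A_x)\cap 2A_x|$ can equal $2$, while $g(w)\le 1$ for every $w>x+a$ (the only element of $2A_x$ above $x+a$ is $2x$), so no competitor of the form $A_x\cup\{w\}$ with the same doubling and larger maximum exists. The paper's route is different: it applies Lemma~\ref{lem:int0} to the pair $(A_x,A_{xy})$ to force $y\ge x+a-(a_1+a_2-2)$, so that the overlap meets only the two sparse $2$--progression tails; this yields the density bound $g(y)\le \lfloor h/2\rfloor+2$ for $y=x+a-h$, and a direct comparison of $y$ with $\mu(k+2,T_{xy})$ (treating $T_{xy}=T-1$, where $c(k+2,T)=4$ and $\mu$ drops by $4$, separately from $T_{xy}=T-h'-1$, where the doubling constant is $3$ and $\mu$ drops by $2$ per step) shows $y<\mu(k+2,T_{xy})$ in every remaining case, so $A_{xy}$ is not $1$--extremal. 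None of these steps appears in your proposal, and the same omission carries over verbatim to part (ii); as written the argument does not prove the lemma.
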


\begin{proof} By the structure of $A$ we have $|2A|\le 3k-4$ and $A$ is extremal. By Lemma \ref{lem:chain} we have 
$x=\mu (k+1,T_x)\ge  2a-(a_1+a_2-2)=a+|P|$, an even number. Hence, since $|P|\ge 4$ and $a$ is odd when $|P|=4$, we have 
\begin{equation}\label{eq:x+a}
x>a+4.
\end{equation}
Let 
$$
T=|2D(A_x)|=|2D((A_x)^-)|=T_x+k+1.
$$
\begin{enumerate}
\item[(i)] Let $g(y)=|(y+A_x)\cap 2A_x|$, so that
$$T_{xy}=|2A_{xy}|=T_x+(k+2)-g(y)=T+1-g(y).$$ 
We have $A_x=A_1\circ P \circ A'_2$ where $A'_2$ is right stable, and therefore $2A_x=A_1\circ Y \circ A'_2$ for some $Y$. 
By Lemma \ref{lem:int0} applied to $A_x$ and $A_{xy}$ we have $y\ge x+a-(a_1+a_2-2)$. Moreover, since each of $A_1, A_2$ is a  $2$--progression, by the structure of $2A_x$ we have (see Figure \ref{fig:2prog} for an illustration)
\begin{equation}\label{eq:g(y)}
g(y)\le \left\{\begin{array}{ll} 1, &  x+a< y\le  2x\\ \lfloor h/2\rfloor +2, & y=x+a-h,\; 0\le h\le a_1+a_2-2 \end{array}\right. .
\end{equation}
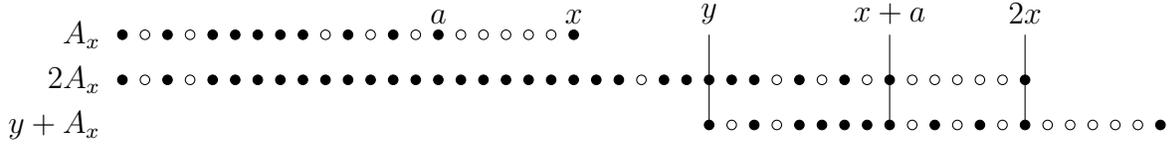
\begin{figure}[h]
		\begin{center}
			\begin{tikzpicture}[scale=0.3]
			\foreach \i in {0,2,4,5,6,7,8,10,12,14,20}
			{
			\draw[fill] (\i,4) circle (6pt);	
			}
			\foreach \i in {1,3,9,11,13,15,16,17,18,19}
			{
			\draw (\i,4) circle (6pt);	
			}
			\node[left] at (-0.5,4) {$A_x$};
			\foreach \i in {4,...,22}
			{
			\draw[fill] (\i,2) circle (6pt);	
			}
			\foreach \i in {0,2,24, 25, 26, 27, 28, 30, 32, 34, 40}
			{
			\draw[fill] (\i,2) circle (6pt);	
			}
			\foreach \i in {1,3,23,29,31,33,35,36,37,38,39}
			{
			\draw (\i,2) circle (6pt);	
			}
			\node[left] at (-0.5,2) {$2A_x$};

\foreach \i in {0,2,4,5,6,7,8,10,12,14,20}
			{
			\draw[fill] (\i+26,0) circle (6pt);	
			}
			\foreach \i in {1,3,9,11,13,15,16,17,18,19}
			{
			\draw (\i+26,0) circle (6pt);	
			}
			\node[left] at (-0.5,0) {$y+A_x$};
			\draw (26,4)--(26,0);
			\draw (40,4)--(40,0);
			\draw (34,4)--(34,0);
			\node[above] at (26,4) {$y$};
			\node[above] at (40,4) {$2x$};
			\node[above] at (34,4) {$x+a$};
			\node[above] at (14,4) {$a$};
			\node[above] at (20,4) {$x$};
			
			\end{tikzpicture}
		\end{center}
		\caption{An illustration of the computation of $g(y)=|(y+A_x)\cap 2A_x|$.}\label{fig:2prog}
	\end{figure}

If $y=2x$ then $A_{xy}$ is $1$--extremal with $T_{xy}=T$ and  $g(y)=1$.

According to \eqref{eq:g(y)}, the largest value of $y$ for which $T_{xy}=T-1$, namely $g(y)=2$,  is $y=x+a$. Since $c(k+2,T)=4$, we have $\mu (k+2,T-1)=\mu (k+2,T)-4=2x-4$. By \eqref{eq:x+a} we have $x+a<2x-4$. Hence    $A_{xy}$ is not $1$--extremal for $y=x+a$. Moreover, we have $c(k+2,T-h')=3$ and  $\mu (k+2,T-h'-1)=\mu (k+2,T-h')-2$ for each $h'$ such that $T_x<T-h'-1< T-1$. According to \eqref{eq:g(y)}, the largest $y$ for which $T_{xy}=T-h'-1$, namely $g(y)=h'+2$, is 
$$y=x+a-2h'<\mu (k+2,T-1)-2h'=\mu (k+2,T-h'-1).$$
Hence $A_{xy}$ is not $1$--extremal for all the remaining values of $y$. Hence, if $A_{xy}$ is $1$--extremal we must have $y=2x$ and $A_{xy}=D(A_x)$.
 This completes the proof of (i).
 
\item[(ii)] The proof follows the same lines as the above one. Let $g(y)=|(y+(A_x)^-)\cap 2(A_x)^-|$, so that
$$T_{xy}'=|2A_{xy}'|=T_x+(k+2)-g(y)=T+1-g(y).$$ 
We have $(A_x)^-=(A'_2)^-\circ P \circ A_1$ where $A_1$ is right stable, and therefore $2(A_x)^-=(A'_2)^-\circ Y \circ A_1$ for some $Y$. 
By Lemma \ref{lem:int0} applied to $(A_x)^-$ and $A_{xy}'$ we have $y\ge x+a-(a_1+a_2-2)$. Moreover, since each of $A_1, A_2$ is a  $2$--progression, by the structure of $2(A_x)^-$ we have (see Figure \ref{fig:2progb} for an illustration)
\begin{equation}\label{eq:g(y)}
g(y)\le \left\{\begin{array}{ll} 1, &  x+a< y\le  2x\\ \lfloor h/2\rfloor +2, & y=x+a-h,\; 0\le h\le a_1+a_2-2 \end{array}\right. .
\end{equation}
\begin{figure}[h]
		\begin{center}
			\begin{tikzpicture}[scale=0.3]
			\foreach \i in {0,6,8,10,12,13,14,15,16,18,20}
			{
			\draw[fill] (\i,4) circle (6pt);	
			}
			\foreach \i in {1,2,3,4,5,7,9,11,17,19}
			{
			\draw (\i,4) circle (6pt);	
			}
			\node[left] at (-0.5,4) {$(A_x)^-$};
			\foreach \i in {0, 6, 8, 10, 12, 13, 14, 15, 16, 18, 19, 20, 21, 22, 23, 24, 25, 26,
27, 28, 29, 30, 31, 32, 33, 34, 35, 36, 38, 40}
			{
			\draw[fill] (\i,2) circle (6pt);	
			}
			\foreach \i in {1,2,3,4,5,7,9,11,17,37,39}
			{
			\draw (\i,2) circle (6pt);	
			}
			\node[left] at (-0.5,2) {$2(A_x)^-$};

\foreach \i in {0,6,8,10,12,13,14,15,16,18,20}
			{
			\draw[fill] (\i+26,0) circle (6pt);	
			}
			\foreach \i in {1,2,3,4,5,7,9,11,17,19}
			{
			\draw (\i+26,0) circle (6pt);	
			}
			\node[left] at (-0.5,0) {$y+(A_x)^-$};
			\draw (26,4)--(26,0);
			\draw (40,4)--(40,0);
			\draw (34,4)--(34,0);
			\node[above] at (26,4) {$y$};
			\node[above] at (40,4) {$2x$};
			\node[above] at (34,4) {$x+a$};
			\node[above] at (20,4) {$x$};
			
			\end{tikzpicture}
		\end{center}
		\caption{An illustration of the computation of $g(y)=|(y+(A_x)^-)\cap 2(A_x)^-|$.}\label{fig:2progb}
	\end{figure}
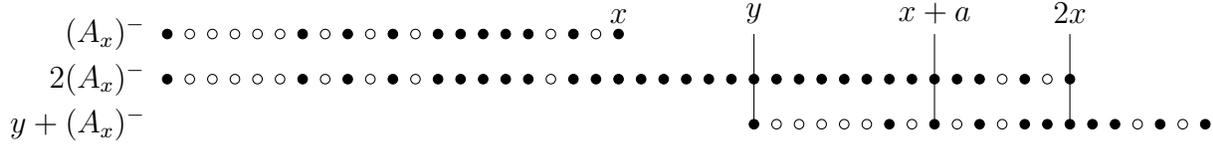

The proof of part (ii) is now completed in the same way as part (i).
 \end{enumerate}
\end{proof}

Our final Lemma concerns the sets of the form $A=A_1\circ P\circ A_2$ where $A_1$ and $A_2$ are $2$--progressions and $|P|=3$. This is a rather special case where the second operator $D_x$ introduced in Section \ref{sec:lb} will be used. We observe that in this case $A$ has an only odd number.

%
%
%

\begin{lemma}\label{lem:oneodd} Let $A$ be a chain with an only odd number $x$. The following hold:
	\begin{enumerate}
		\item[(i)]    $A=D_x(A')$ where $A'$ is a chain.
		\item[(ii)]  Every chain $B$ containing $A$ with $|2B|>3|B|-4$ has also an only odd number.
	\end{enumerate}
\end{lemma}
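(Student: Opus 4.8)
\textbf{Proof plan for Lemma \ref{lem:oneodd}.}

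The plan is to exploit the fact that a chain $A$ in normal form with a single odd element has all its other elements even, so that $A=2\cdot A'\cup\{x\}$ where $A'=\{a/2:a\in A,\ a\text{ even}\}$ is a set in normal form with $|A'|=|A|-1$, and $x$ is odd with $x\in 2\cdot A'$ (by one-dimensionality, $x$ participates in an additive relation, which must be of the form $a_i+a_j=x$ with $a_i,a_j$ even since $x$ is the only odd element, hence $x\in 2A$ forces $x\in 2A'$ after halving; here one uses $x\not\in A$ automatically since $x$ is odd and halving would not stay integral, but in fact $x\notin A'$ is needed and follows because elements of $A'$ are halves of even elements of $A$ while... careful: one must check $x\notin A'$, i.e.\ $2x\notin A$, which holds when $2x>\max(A)$; since $A$ is a chain this is the relevant case). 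First I would establish that $A'$ inherits the chain structure: restricting the defining sequence $A_3\subset\cdots\subset A_k=A$ and discarding $x$ at the stage it is introduced gives a candidate filtration of $A'$, and I would check it satisfies conditions (i)--(iii) of the definition of chain. The one-dimensionality of each truncation follows from Theorem \ref{thm:kl} since halving is a Freiman isomorphism on the even part and removing $x$ only removes the relations involving $x$; the extremality condition (iii) for $A'$ would be deduced from that of $A$ via Lemma \ref{lem:dd}, which says precisely that $\max(D_x(A'))=\mu(k,T)$ when $\max(A')=\mu(k-1,T-(k-1))$ and that $D_x$ raises the doubling constant by one. This proves (i): $A=D_x(A')$ with $A'$ a chain.

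For (ii), suppose $B\supseteq A$ is a chain with $|2B|>3|B|-4$, and suppose for contradiction $B$ contains a second odd element. Since chains are built by successively adjoining one element (to the left or to the right, after possible reflexion, per the filtration and the remarks following Theorem \ref{thm:3k-4}), and since $A$ already has doubling constant $c(|A|,|2A|)\ge 3$ (because $A=D_x(A')$ lands in $I_{c,k}$ with $c\ge 3$ once $|P|=3$ produces $|2A|>3|A|-4$ — this is exactly the regime of Lemma \ref{lem:oneodd}), I would track the parity of newly added elements along the chain from $A$ to $B$. The key point is that each newly adjoined element $y$ must equal $\mu$ of the resulting (cardinality, doubling) pair by the various uniqueness lemmas (\ref{lem:chain}, \ref{cor:doubling}, \ref{lem:unique}, \ref{lem:lunique}, \ref{lem:2prog2odd}), and $\mu(k,T)=2^{c-2}(k-c+b+1)$ is even whenever $c=c(k,T)\ge 3$. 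So every element added beyond the first few must be even, and the odd elements of $B$ can only come from the odd elements already present in the smallest subchain with doubling constant $\ge 3$, which is $A$ itself — forcing $B$ to have exactly one odd element.

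The main obstacle I anticipate is the bookkeeping in (ii): one must argue that along the entire chain from $A$ up to $B$ one never reintroduces an odd element, and the subtlety is that the operators $D_x$ of type (iii)--(iv) in the family $\Phi$ multiply everything by $2$ and then add a \emph{single} odd number, so a priori a chain could acquire a new odd element at such a step. The resolution is that a $D_x$-step strictly increases the number of odd elements only when applied to a set with no odd element at all (since $2\cdot\tilde X$ has all even entries), so once $A$ is reached the number of odd elements is pinned at $1$ and cannot grow: applying $D_x$ to a set that already contains an odd element is impossible (the hypothesis of $D_x$ requires multiplying a normal set by $2$, but then the pre-existing odd element would have to have been even). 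Making this precise — in particular showing that within the chain filtration above $A$ only operators of type (i)--(ii) can occur, which is where Lemma \ref{lem:2prog2odd}(i)--(ii) and the parity of $\mu$ do the work — is the crux, and I would organize it as an induction on $|B|-|A|$ using those lemmas to identify the unique extension at each step.
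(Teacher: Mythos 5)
For part (i) your plan matches the paper's: $A'$ is obtained by halving the even part, the chain filtration of $A$ restricts to one of $A'$, and extremality of $A'$ is forced because a better $B'$ would lift via $D_x$ to a set beating $A$. (Your appeal to Lemma \ref{lem:dd} is stated in the wrong direction --- that lemma constructs extremal sets from extremal sets, it does not transfer extremality downward --- but the contradiction argument you gesture at is the right one. Also note the paper gets $\gcd(A')=1$ from the three consecutive elements of $A$, one of which is $x$; your worry about $x\in A'$ is a non-issue but the normalization point does need the consecutive-elements observation.)

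Part (ii) has a genuine gap. You propose to deduce that every element adjoined above $A$ is even by invoking the uniqueness lemmas (\ref{lem:chain}, \ref{cor:doubling}, \ref{lem:unique}, \ref{lem:lunique}, \ref{lem:2prog2odd}) to identify each new element with $\mu(k,T)$, which is even for $c\ge 3$. But the hypotheses of those lemmas exclude precisely the situation Lemma \ref{lem:oneodd} is designed to handle: Lemma \ref{lem:2prog2odd} requires $|P|\ge 4$, i.e.\ more than one odd number, and Lemmas \ref{lem:unique}/\ref{lem:lunique} require the set to be of the form $D(\cdot)$ of a $1$--extremal set with $\mu(k,T)>2^c$, whereas a one-odd-number chain is of the form $D_x(\cdot)$ with $|P|=3$. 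Indeed, in the proof of Theorem \ref{thm:chain} it is Lemma \ref{lem:oneodd}(ii) that is used to \emph{license} the application of Lemma \ref{lem:2prog2odd}, so deriving (ii) from those lemmas would be circular. What is actually needed --- and what the paper supplies --- is a direct, self-contained argument: for any odd $y>\max(A)$ with $A\cup\{y\}$ one-dimensional, one exhibits an even $y'>y$ with $A\cup\{y'\}$ one-dimensional and $|2(A\cup\{y'\})|\le|2(A\cup\{y\})|$, using the decomposition $|(y+A)\cap 2A|=|(y+A_0)\cap(x+A_0)|+|\{y+x\}\cap 2A_0|$ ($A_0$ the even part) and the relation $2x=z+z'$ to shift $y$ by $\alpha=x-z$; this shows no odd extension is ever $1$--extremal, and an induction along the filtration finishes. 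Your ``resolution'' paragraph about when a $D_x$-step can occur does not substitute for this: the filtration steps are literal one-element extensions of integer sets, parity is not an $F$--isomorphism invariant, and nothing in your argument rules out an odd new element at a given step. That missing parity-exclusion computation is the core of the lemma.
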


\begin{proof} (i)  The fact that $A=D_x(A')$ follows from the definition of the operator $D_x$. Let us show that $A'$ is a chain. 
	
	Since $A$ is a chain, it contains  three consecutive elements, one of them the only odd number in $A$. It follows that $A'$ contains two consecutive elements and hence $\gcd(A')=1$. Therefore $A'$ is in normal form. Moreover, if $B'$ is a chain with the same cardinality and doubling as $A'$ but wit larger volume, then $B=D(B')$ has the same cardinality and doubling as $A$ but with larger volume, contradicting that $A$ is extremal. It remains to show that $A'$ is a chain. Let $A_3\subset A_4 \cdots \subset A_k=A$ be the sequence of chains contained in $A$ (up to translation). We observe that $x\in A_3$. If $A'_i=D_x(A_i\setminus \{ x\})$ then $A'_4\subset \cdots \subset A'_k$ is a sequence of chains contained in $A'$.

(ii) In order to prove (ii), suppose that $A\cup \{y\}$ is a $1$--dimensional set with $y>\max(A)$ odd. We have
$$
|2(A\cup \{y\})|=|2A|+k+1-|(y+A)\cap 2A|.
$$
Since all elements in $y+A$ except $y+x$ are odd numbers, we have
\begin{equation}\label{eq:y}
|(y+A)\cap 2A|=|(y+A_0) \cap (x+A_0)|+|(y+x)\cap 2A_0|.
\end{equation}
We show that there is an even number $y'>y$ such that $A\cup \{y'\}$ is $1$--dimensional and has no larger doubling than $A\cup \{ y\}$, so that the latter set is not $1$--extremal. 

We consider two cases:

{\it Case 1.} $|(y+A_0)\cap (x+A_0)|\neq 0$.

Since  $A$ is  $1$--dimensional, $x$ must be involved in one elementary relation. Being the only odd number in $A$ the relation must be $2x=z+z'$ for some $z<z'\in A_0$. Let $\alpha=z'-x=x-z$ and $y'=y+\alpha$, an even number.

Since $|(y+A_0)\cap (x+A_0)|\neq 0$, we have $y+u=x+u'$ for some $u,u'\in A_0$ and hence 
$$
y'+u=z'+u'\in 2A_0,
$$
which shows that $A\cup \{y'\}$ is also $1$--dimensional. Moreover,
\begin{equation}\label{eq:y'}
|(y'+A)\cap 2A|\ge |(y'+A_0)\cap (z'+ A_0)|=|(y+A_0)\cap (x+A_0)|.
\end{equation}
If $|(x+y)\cap 2A_0|=0$ then \eqref{eq:y} and  \eqref{eq:y'} show that $A\cup \{y\}$ is not $1$--extremal.

If on the contrary $x+y=w+w'$ for some $w,w'\in A_0$ then $y'+z=(y+\alpha)+(x-\alpha)=w+w'\in |(y'+A)\cap 2A|$. We observe that, since $\alpha=z'-z$ is odd, $y'+z\not\in z'+A_0$ and therefore the inequality in \eqref{eq:y'} is strict. Again \eqref{eq:y} shows that $A\cup \{y\}$ is not $1$--extremal. 

%
%
%
%
%

{\it Case 2.} $|(y+A_0)\cap (x+A_0)|= 0$.

In this case we take $y'=2\max(A)>y$  which has doubling $|2A|+k$ as $A\cup \{y\}$, again contradicting that $A\cup \{ y\}$ is extremal.
\end{proof}

\section{Proof of main result}\label{sec:proof}

{\it Proof of Theorem \ref{thm:chain}} Suppose first that $A$ contains an only odd number. By Lemma \ref{lem:oneodd}(i) we can write $A=D_x(A')$ where $x$ is the odd number in $A$ and  $A'$ is a chain.
Hence we can write $A=\phi_{1}\cdots \phi_k(A_0)$ where each $\phi_i$ is of the form $D_{x_i}$ for some odd number $x_i$ and either $A_0$ has more than one odd number or $A_0\cong_F\{ 1,2,3\}$. In the latter case the Theorem holds with $B=\{ 1,2,3\}$. In what follows we assume that $A$ contains more than one odd number.

Let $A_3\subset A_4\subset \cdots \subset A_k=A$ be a chain  sequence of  $A$.
 Let $t$ be the largest integer $s$ for which $|2A_{s}|\le 3|A_s|-4$ and set $B=A_t$. By Lemma \ref{lem:oneodd}(ii) $B$ contains more than one odd number. 
 
 By Lemma \ref{cor:doubling}, we have $A_{t+1}\cong_F D(A_t)$ or $A_{t+1}\cong_F D^-(A_t)$ unless $B=B_0\circ P\circ B_2$ where $B_1$ and $B_2$ are $2$--progressions. If this is not the case then lemmas \ref{lem:unique} and \ref{lem:lunique} show that $A_{i}\cong_F D(A_{i-1})$ for each $t<i\le k$ proving the statement of the Theorem.
 
 The last case to consider is $B=B_0\circ P\circ B_2$ where $B_1$ and $B_2$ are $2$--progressions. Since $B$ contains more than one odd number we have $|P|\ge 4$. By Lemma \ref{lem:2prog2odd} we have  $A_{t+2}\cong_F D(A_{t+1})$. Lemmas \ref{lem:unique} and \ref{lem:lunique} show that  $A_{i}\cong_F D(A_{i-1})$ for each $t<i\le k$ proving the statement of the Theorem in this case. This completes the proof.

\section{Final Remarks}\label{sec:final}

The notion of chains for which Conjecture \ref{conj:main} is proved in this paper is a quite  natural one, and gives some evidence to the conjecture. Chains have the strong structure described in Theorem \ref{thm:chain}. Proving the conjecture for general sets has the difficulty of loosing these strong  structural properties along subsets, and additional techniques have to be used. So far we have only been able to address the case of doubling constant $c=3$,  the contents of a forthcoming paper,  which nevertheless opens a path which has been the object of many attempts for several decades. It seems unlikely that the conjecture can be proved for all values of the doubling constant $c$ up to $|A|/2$ as it has been for chains. It would be a significant breakthrough to know something about the structure of sets with doubling constant some growing function of $|A|$.

In the definition of chains we introduced the notion of $1$--extremal sets, sets with largest volume for given cardinality and doubling  among $1$--dimensional sets. A partial result towards the proof that extremal sets are in fact $1$--dimensional, as asserted in Conjecture \ref{conj:main}, is given in Freiman \cite{Freiman2014}. A natural question arises as if the conjectured maximum volume could be smaller if we restrict our sets to be $d$--dimensional with $d>1$. The following simple example shows that the lower bound $\mu (k,T)$ given in \eqref{eq:vmu} for the largest volume of a set with given cardinality $k$ and doubling $T$ is not far from the truth when restricted to $d$--dimensional sets. Let $A=A_1\cup \{e_2,\ldots ,e_d\}$ where $A_1\subset \R e_1$ is a $1$--extremal set and $e_1,\ldots ,e_d$ is the standard basis of the $d$--dimensional space $\R^d$. If $k=|A|$ and $T=|2A|$ then
$$
T=|2A_1|+k+1,
$$
while 
$$
vol(A)=vol(A_1)+(d-1)\ge \mu (k-(d-1), T-(k+1))+d,
$$
giving  a lower bound on the volume of  the $d$--dimensional set $A$ in terms of the function $\mu (k,T)$. 

The Freiman--Ruzsa theorem is instrumental in many applications. It would be interesting to feed the quantitative and structural information provided by Theorem \ref{thm:chain} to these applications to assess the relevance of the result.

\


\begin{thebibliography}{9}

\bibitem{Bilu99} Y. Bilu, Structure of sets with small sumset, in {\it Structure Theory of Set Addition}, Ast\'erisque 258 (1999), 77--108.	

\bibitem{Chang2002} M.-C. Chang, Polynomial bounds in Freiman's theorem, {\it Duke Math. J.} 113 (2002), 399--419.

\bibitem{Freiman73} 	G.A. Freiman, Foundations of a structural theory of set addition, Translations of Math. Monographs, 37, AMS, 1973.

\bibitem{Freiman87} Freiman, Gregory A. What is the structure of K if K+K is small? Number theory (New York, 1984–-1985), 109–-134, Lecture Notes in Math., 1240, Springer, Berlin, 1987.

\bibitem{Freiman2009} G.A. Freiman,  
Inverse additive number theory. XI. Long arithmetic progressions in sets with small sumsets. 
{\it Acta Arith}  137 (2009), no. 4, 325–-331. 

\bibitem{Freiman2014} G.A. Freiman. On the Additive Volume of Sets of Integers. arXiv:1412.5082 (2014)



\bibitem{KL2000}  S. V. Konyagin, V. F. Lev. Combinatorics and linear algebra of Freiman's isomorphism. Mathematika 47 (2000) 39--51.

\bibitem{Ruzsa94} I. Ruzsa, Generalized arithmetic progressions and sumsets, {\it Acta Math. Hungar.} 65 (1994), no 4, 379--388.

\bibitem{Sanders2008} T. Sanders, Appendix to: Roth's theorem on progressions revisited by J. Bourgain, J. Anal. Math. 104 (2008), 193--316.

\bibitem{Schoen2011} T. Schoen,  Near optimal bounds in Freiman's theorem. {\it Duke Math. J.} 158 (2011), no. 1, 1--12. 

\end{thebibliography}
 \end{document}